\begin{document}

\received{January 2012}

\markboth{P. O. Perry \and P. J. Wolfe}{Null models}

\title{Null models for network data}

\author{Patrick O.\ Perry}
\affil{Statistics Group, NYU Stern School of Business, New~York,
New~York, 10012, U.S.A. \email{pperry@stern.nyu.edu}}
\author{\and Patrick J.\ Wolfe}
\affil{Department of Statistical Science, University College London, London WC1E 6BT, U.K.
\email{patrick@stats.ucl.ac.uk}}

\maketitle

\begin{abstract}
The analysis of datasets taking the form of simple, undirected graphs
continues to gain in importance across a variety of disciplines.  Two choices
of null model, the logistic-linear model and the implicit log-linear model,
have come into common use for analyzing such network data, in part because
each accounts for the heterogeneity of network node degrees typically observed
in practice. Here we show how these both may be viewed as instances of a
broader class of null models, with the property that all members of this class
give rise to essentially the same likelihood-based estimates of link
probabilities in sparse graph regimes.  This facilitates likelihood-based
computation and inference, and enables practitioners to choose the most
appropriate null model from this family based on application context.
Comparative model fits for a variety of network datasets demonstrate the
practical implications of our results.
\end{abstract}

\begin{keywords}
Approximate likelihood-based inference; Generalized linear model; Network data; Null model; Social network analysis; Sparse random graph.
\end{keywords}

\section{Introduction}

Statisticians have long recognized the importance of so-called null models.  There
are two main uses for null models: (1) they serve as baseline points of
comparison for assessing goodness of fit (e.g., for score tests and analysis
of variance); and (2) they facilitate residuals-based analyses (e.g., for
exploratory data analysis and outlier detection).

In contexts where data take the form of a simple, undirected network on $n$
nodes, the model posited by \cite{erdos1959random}, considered with edges appearing as
independently and identically distributed Bernoulli trials, is perhaps the
simplest possible null model.  However, with only a single parameter, it lacks
the ability to capture the extent of degree heterogeneity commonly associated
with network data in practice \citep{barabasi1999emergence}.  As alternatives,
two popular $n$-parameter models have emerged in the literature, each of which
associates a single parameter (or estimate thereof) to every node, and in
doing so allows for heterogeneity of nodal degrees.

    \emph{The logistic-linear model}
    takes the probability $p_{ij}$ of observing
    an edge between nodes $i$~and~$j$ to be given by
    \[
      \logit p_{ij} = \alpha_i + \alpha_j,
    \]
    where $\valpha = (\alpha_1, \dotsc, \alpha_n)$ is a vector of node-specific parameters.
    \cite{chatterjee2011random} term this the $\beta$-model; it has
    also been considered by \cite{park2004statistical} and \cite{blitzstein2011sequential}, and before them by \cite{holland1981exponential} in its
    directed form.  See also \cite{hunter2004mm} and \cite{rinaldo2011maximum}, and references therein.

    \emph{The (implicit) log-linear model}
    instead takes edge probabilities to be given in terms of an observed binary, symmetric adjacency matrix $\mX$ as
    \[
      \log p_{ij} = \log X_{i+} + \log X_{j+} - \log X_{++},
    \]
    where $X_{i+} = \sum_{k=1}^n X_{ik}$ is the degree of the $i$th node, and $X_{++} = \sum_{i=1}^n X_{i+}$ the sum of all observed degrees.  This model is implicit in that its specified
    edge probabilities depend on the observed data; thus, it
    is not a proper null model.  It is more accurate to say that each
    $p_{ij}$ here is the \emph{estimated} probability under a model, and that
    the model has been left unspecified.
    \cite{girvan2002community} take this as a basis for their residuals-based approach to
    community detection or nodal partitioning in networks, while \cite{chung2003spectra} and others have studied its associated spectral graph properties.

Both the logistic-linear and log-linear models have appealing features.  From
a statistical standpoint, the former is more convenient since it is based on
the canonical link function, whereas from an analytical and computational
standpoint, the latter is more convenient since the set of estimated edge
probabilities takes the form of an outer product.
However, at the same time, the choice between them remains unsatisfying.
Practitioners lack the
necessary guidance to judge which of these two null models is most appropriate
in a given context, along with a clear understanding of the differences
between them.

In the sequel we resolve this issue and show that, in the sparse adjacency
regimes wherein network datasets are typically observed, the two models are
equivalent for all practical purposes.  Specifically, both models lead to
essentially the same parameter estimates $\hat \alpha_i$, the same
probability estimates $\hat p_{ij}$, and the same null log-likelihood
$\hat \ell$.  In fact, by considering these two models
as members of a broader class of null models, we prove the stronger result
that all models in this family lead to essentially the same maximum likelihood
estimates for the parameters of interest.
We emphasize that our results hold irrespective of the data-generating
mechanism giving rise to $\mX$.  Specifically, they hold whenever the
node degrees $X_{i+}$ are small relative to the total number of edges $X_{++}/2$ in the network.

\section{Statement of results}
\subsection{A family of null models for network data}

As above let $\mX$ be an $n \times n$ binary, symmetric adjacency matrix with zeros along its main diagonal, corresponding to a simple, undirected graph on $n$ nodes.  Consider $\mX$ to be random and
$\valpha = (\alpha_1, \dotsc, \alpha_n)$ a vector of node-specific
parameters.
We suppose that $\mX$ has independent Bernoulli elements above the main diagonal
such that $\Prob(X_{ij} = 1) = p_{ij}(\valpha)$, and specify a corresponding family of probabilistic null models for $\mX$, each parameterized by $\valpha$.

To this end, let $\varepsilon = \{ \varepsilon_{ij} : i \neq j \}$ be a
family of smooth functions,
where $\varepsilon_{ij}$ maps pairs of real numbers to real numbers, and
$\varepsilon_{ij}(x, y)~=~\varepsilon_{ji}(y, x)$.
Let model $\mathcal{M}_\varepsilon$ then specify $p_{ij}$ as
\begin{equation}\label{E:null-general}
  \mathcal{M}_\varepsilon
    \!:\,\,
      \log p_{ij}
        = \alpha_i + \alpha_j + \varepsilon_{ij}(\alpha_i, \alpha_j),
\end{equation}
so that we obtain a class of log-linear models indexed by $\varepsilon$.

Observe that with $\varepsilon_{ij}(\alpha_i, \alpha_j)$ identically zero we recover the log-linear null model alluded to in the Introduction, explicitly parameterized by $\valpha$.  In fact, this class encompasses three common choices of link function:
\begin{subequations}
\label{E:null-prob-links}
\begin{align}
  \mathcal{M}_\text{log}
    \!&:\,\, \log p_{ij} = \alpha_i + \alpha_j \label{E:null-log-prob}, \\
  \mathcal{M}_\text{cloglog}
    \!&:\,\, \log(-\log(1 -  p_{ij})) = \alpha_i + \alpha_j \label{E:null-cloglog-prob}, \\
  \mathcal{M}_\text{logit}
    \!&:\,\, \logit p_{ij} = \alpha_i + \alpha_j \label{E:null-logit-prob}.
\end{align}
\end{subequations}
To see this, set
\(
  \varepsilon_{ij}(\alpha_i, \alpha_j)
    = \log\{1 - \exp(-e^{\alpha_i+\alpha_j})\} - (\alpha_i + \alpha_j)
\)
for the complementary log-log link model $\mathcal{M}_\text{cloglog}$, and for the model $\mathcal{M}_\text{logit}$, set
\(
  \varepsilon_{ij}(\alpha_i, \alpha_j) = - \log\{1 + \exp(\alpha_i + \alpha_j)\}.
\)

As we have seen, the logit-link model $\mathcal{M}_\text{logit}$ is an undirected version of
\citefullauthor{holland1981exponential}'s~(\citeyear{holland1981exponential})
exponential family random graph model.  As noted by \cite{chatterjee2011random}, the degree sequence of $\mX$ is sufficient for $\valpha$ in this case, formalizing the null-model intuition that all graphs exhibiting the same degree sequence be considered as equally likely.
The log-link model $\mathcal{M}_\text{log}$ can be considered as an
alternate parametrization of
\citefullauthor{chung2002connected}'s~(\citeyear{chung2002connected})
expected degree model, with the additional constraint that self-loops of the form $X_{ii} = 1$ are explicitly disallowed.  The complementary log-log model $\mathcal{M}_\text{cloglog}$ has not seen application in the network literature to date, but the same functional form appears commonly in the context of generalized linear models \citep{muccullagh1989generalized}.

\subsection{Properties}

Before placing conditions on $\varepsilon$, it is instructive to consider key properties of the family of models that can be written in the form $\mathcal{M}_\varepsilon$.
Observe first that the expected degree of node $i$ is
\begin{equation}
  \E(X_{i+}) = \sum_{j \neq i} p_{ij}  \label{E:chung-lu-degs}.
\end{equation}
Thus, for models in this class, higher values of $\alpha_i$ lead to
higher expected degrees for node~$i$ whenever $\varepsilon$ is small.  It is therefore natural to posit as a simple estimator of $\valpha$ a monotone transformation of the degree sequence specified by $\mX$:
\begin{equation}
  \approxp{\alpha}_i = \log X_{i+} - \log \sqrt{X_{++}} \label{E:chung-lu-alphai}.
\end{equation}
This is equivalent to estimating $p_{ij}$ via
\begin{equation}
  \approxp{p}_{ij} = \frac{X_{i+} X_{j+}}{X_{++}} \label{E:chung-lu-pij},
\end{equation}
which corresponds precisely to the implicit log-linear model described in the Introduction, and may also be viewed in light of~\eqref{E:chung-lu-degs} as an approximate moment-matching technique.

We show below that $\vtalpha$ as defined in~\eqref{E:chung-lu-alphai} suffices as an estimator for this model class in the sparse graph regime.  To gain intuition into this claim, consider the corresponding likelihood function as follows.

The log-likelihood for any simple, undirected graph model with independent edges is
\begin{equation}
  \log\Bigl\{
    \prod_{i < j}
      X_{ij}^{p_{ij}}
      (1 - X_{ij})^{1 - p_{ij}}
  \Bigr\}
  =
  \sum_{i < j}
      X_{ij} \log p_{ij}
      +
      (1 - X_{ij}) \log (1 - p_{ij}) \label{E:Bern-log-lik}.
\end{equation}
Intuitively, if $p_{ij}$ is small, then a Bernoulli random variable
with mean $p_{ij}$ behaves like a Poisson random variable having the
same mean.  In a rough sense, this Bernoulli log-likelihood is close to one under
which $X_{ij}$ is treated as a Poisson random variable:
\begin{equation*}
  \log\Bigl\{
    \prod_{i < j}
      \frac{p_{ij}^{X_{ij}}}{X_{ij}!}
      \exp(-p_{ij})
  \Bigr\}
  =
  \sum_{i < j}
    X_{ij} \log p_{ij} - p_{ij},
\end{equation*}
up to a constant shift depending on $\mX$.

Substituting the log-linear model parametrization $\mathcal{M}_\text{log}$ of~\eqref{E:null-log-prob}, which corresponds to the canonical link under Poisson sampling, we obtain
\begin{align*}
  \ell_\text{Pois}(\valpha)
    &=
      \sum_{i < j}
        X_{ij} (\alpha_i + \alpha_j)
        - \exp(\alpha_i + \alpha_j) \\
    &=
      \sum_{i=1}^n
        \alpha_i \, X_{i+}
      -
      \sum_{i \neq j}
        \exp(\alpha_i + \alpha_j),
\end{align*}
and thus, the solution to the Poisson likelihood equation
$\nabla \ell_\text{Pois}(\valpha) = 0$ satisfies
\[
  X_{i+} =
    \sum_{j \neq i}
      \exp(\alpha_i + \alpha_j) \quad (i = 1,\ldots,n).
\]
When $\valpha$ is set to $\vtalpha$ as defined in~\eqref{E:chung-lu-alphai}, the right-hand side becomes
\begin{align*}
  \sum_{j \neq i} \exp(\approxp{\alpha}_i + \approxp{\alpha}_j)
    &= \frac{X_{i+}}{X_{++}} \sum_{j \neq i} X_{j+} \\
    &= X_{i+}\left(1 - \frac{X_{i+}}{X_{++}}\right),
\end{align*}
and so we see that each component of $\nabla \ell_\text{Pois} (\vtalpha)$ is precisely $X_{i+}^2/X_{++}$.  Hence, when this quantity is small for every $i$, we can expect that both $\approxp{\valpha}$ and correspondingly $\approxp{p}$ are close to their respective maximum
likelihood estimates.  We formalize this notion as follows.

\subsection{Approximation results for maximum likelihood inference}

Our main result is an approximation theorem for likelihood-based inference under models taking the form of $\mathcal{M}_\varepsilon$ from~\eqref{E:null-general}.  Under suitable sparsity constraints and for many choices of $\varepsilon$, including those given by~\eqref{E:null-prob-links}, a maximum likelihood estimate of each parameter $\alpha_i$ exists and is close to $\approxp{\alpha_i}$ as defined in~\eqref{E:chung-lu-alphai}.  Furthermore, the corresponding maximum likelihood estimate of each edge probability $p_{ij}$ is close to $\approxp{p}_{ij}$, defined in~\eqref{E:chung-lu-pij}, and the null log-likelihood under $\mathcal{M}_\varepsilon$ evaluated at $\approxp{p}$ is close to that evaluated at the corresponding maximum likelihood estimate of $p$.

These approximation results hold for all $\varepsilon$ satisfying the following condition.
\begin{assumption}\label{A:small-eps}
  For all pairs $i,j$ and all choices of $k$, $l$, and $m$, the functions
  \(
    \varepsilon_{ij},
  \)
  \(
    \partial \varepsilon_{ij} / \partial \alpha_k,
  \)
  \(
    \partial^2 \varepsilon_{ij} / (\partial \alpha_k \partial \alpha_l),
  \)
  and
  \(
    \partial^3 \varepsilon_{ij} / (\partial \alpha_k \partial \alpha_l \partial \alpha_m),
  \)
  are sub-exponential in $\alpha_i + \alpha_j$.  That is, there
  exists a constant $C_0$ such that the absolute values of these functions are
  bounded by $C_0 \exp(\alpha_i + \alpha_j)$.
\end{assumption}

\noindent
Recall model $\mathcal{M}_\text{log}$ from~\eqref{E:null-log-prob}, for which $\varepsilon$ is identically zero and thus satisfies Assumption~\ref{A:small-eps} with $C_0 = 0$. One can show that the specifications of $\varepsilon$ arising in the $\mathcal{M}_\text{cloglog}$ and $\mathcal{M}_\text{logit}$ models, from~\eqref{E:null-cloglog-prob} and~\eqref{E:null-logit-prob} respectively, satisfy Assumption~\ref{A:small-eps} with $C_0$ equal to $1/2$ and $1$.

Our sparsity requirement is that each component $X_{i+}^2/X_{++}$ of $\nabla \ell_\text{Pois} (\vtalpha)$ be sufficiently small; for example, $15^{-2}$ in the case of the log-link model $\mathcal{M}_\text{log}$.  We then have the following.

\begin{theorem}\label{T:null-mle-approx}
  Suppose $\mX$ is an $n \times n$ adjacency matrix such that
  \(
    1 \leq X_{i+}^2 \leq \varepsilon_0 \, X_{++}
  \)
  for all $i$.
  For some set of smooth functions $\varepsilon$ satisfying
  Assumption~\ref{A:small-eps}, let model $\mathcal{M}_\varepsilon$
  with parameter vector $\valpha$ in $\reals^n$
  specify edge probability $p_{ij} = p_{ij}(\valpha)$
  as in \eqref{E:null-general}.  Let $\vtalpha$ be as defined
  in~\eqref{E:chung-lu-alphai}.

  Define
  \(
    \bar \varepsilon_0 = \{15 \, (C_0 + 1)\}^{-2}
  \)
  and
  \(
    C = 10 \, (C_0 + 1),
  \)
  where $C_0$ is as in Assumption~\ref{A:small-eps}.
  If $\varepsilon_0 \leq \bar \varepsilon_0$ then there exists
  a solution to the likelihood equation, $\vhalpha$, such that
  \[
    \| \vhalpha - \vtalpha \|_\infty
      \leq  C \, \varepsilon_0.
  \]
\end{theorem}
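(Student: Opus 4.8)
The plan is to exhibit a zero of the score $g(\valpha)=\nabla\ell(\valpha)$ inside the sup-norm ball $B=\{\valpha:\|\valpha-\vtalpha\|_\infty\le C\varepsilon_0\}$ by a Newton-type fixed-point argument. Differentiating the Bernoulli log-likelihood \eqref{E:Bern-log-lik} under $\mathcal{M}_\varepsilon$ gives $g_k(\valpha)=\sum_{j\neq k}(X_{kj}-p_{kj})\,(1+\partial\varepsilon_{kj}/\partial\alpha_k)/(1-p_{kj})$, and the negative Hessian $A(\valpha)=-\nabla^2\ell(\valpha)$ agrees to leading order with the Poisson curvature, whose action is given by $(A(\valpha)u)_k=\sum_{j\neq k}p_{kj}(u_k+u_j)$ up to contributions bounded via Assumption~\ref{A:small-eps}. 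Fixing $A=A(\vtalpha)$, I would study the chord map $T(\valpha)=\valpha+A^{-1}g(\valpha)$, whose fixed points are exactly the critical points of $\ell$, and show it contracts $B$.

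First I would bound $g(\vtalpha)$. The computation preceding the theorem shows $\sum_{j\neq k}(X_{kj}-p_{kj})=X_{k+}^2/X_{++}$ when $\varepsilon\equiv0$; reinstating $\varepsilon$ and the factor $1/(1-p_{kj})$ multiplies each summand by a quantity differing from one by at most a constant times $(C_0+1)X_{j+}/X_{++}$, using Assumption~\ref{A:small-eps} together with the identity $\exp(\approxp{\alpha}_k+\approxp{\alpha}_j)=X_{k+}X_{j+}/X_{++}$. Summing and invoking the sparsity hypothesis $X_{k+}^2\le\varepsilon_0X_{++}$ then yields $|g_k(\vtalpha)|\le \mathrm{const}\cdot(C_0+1)\,X_{k+}^2/X_{++}\le \mathrm{const}\cdot(C_0+1)\,\varepsilon_0\,X_{k+}$, so that $g(\vtalpha)$ is of the same small order as the Poisson residual associated with \eqref{E:chung-lu-alphai}.

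The crux is a quantitative inverse for $A$ in the relevant norm, and here diagonal dominance fails: the off-diagonal row sum $\sum_{j\neq k}p_{kj}\approx X_{k+}(1-X_{k+}/X_{++})$ matches the diagonal, so $A$ possesses directions of small curvature and a naive componentwise contraction cannot succeed. Instead I would use the explicit structure $(Au)_k=\sum_{j\neq k}p_{kj}(u_k+u_j)$, which at $\vtalpha$ reduces the system $Au=b$ to the componentwise relation $u_k=\{b_k/X_{k+}-w/X_{++}\}/\{1-2X_{k+}/X_{++}\}$ coupled through the single scalar $w=\sum_j X_{j+}u_j$, determined by summing $X_{k+}u_k$ over $k$. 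Because sparsity keeps every factor $1-2X_{k+}/X_{++}$ and the coefficient of $w$ within $O(\varepsilon_0)$ of one, solving this scalar equation gives $\|A^{-1}b\|_\infty\le \mathrm{const}\cdot\max_k|b_k|/X_{k+}$; with $b=g(\vtalpha)$ this bounds $\|T(\vtalpha)-\vtalpha\|_\infty$ by a constant multiple of $(C_0+1)\varepsilon_0$. Establishing this structured inverse, and propagating the same control uniformly over $B$, is the main technical obstacle.

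Finally I would close the contraction. For $\valpha,\valpha'\in B$ a mean-value expansion gives $T(\valpha)-T(\valpha')=A^{-1}\{A-A(\xi)\}(\valpha-\valpha')$ for an intermediate point $\xi\in B$, and since the third derivatives of $\ell$ are of the same order as its second derivatives, again by Assumption~\ref{A:small-eps}, the Hessian varies over $B$ by at most a constant times $(C_0+1)$ times the radius; combined with the inverse bound of the previous step this makes the Lipschitz constant of $T$ proportional to $(C_0+1)^2\varepsilon_0$. Requiring this to be a genuine contraction, together with the self-map condition that the radius exceed $\|T(\vtalpha)-\vtalpha\|_\infty$ divided by one minus the contraction factor, forces $\varepsilon_0\le\bar\varepsilon_0=\{15(C_0+1)\}^{-2}$ and permits the radius $C\varepsilon_0=10(C_0+1)\varepsilon_0$; the square in $\bar\varepsilon_0$ reflects that the Hessian is measured over a ball whose own radius already carries a factor $(C_0+1)\varepsilon_0$. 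Banach's fixed-point theorem then supplies $\vhalpha\in B$ with $T(\vhalpha)=\vhalpha$, hence $g(\vhalpha)=0$ and $\|\vhalpha-\vtalpha\|_\infty\le C\varepsilon_0$, as claimed.
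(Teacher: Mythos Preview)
Your proposal is correct and follows essentially the same route as the paper. Both arguments identify the key structural fact that the Hessian at $\vtalpha$ is, to leading order, diagonal-plus-rank-one in the vector $\vd=(X_{1+},\dots,X_{n+})$, so that its inverse acts like $\|A^{-1}b\|_\infty\lesssim\max_k|b_k|/X_{k+}$; the paper states this via the Sherman--Morrison formula applied to $\mH=-(\mD+\vd\vd^\trans/X_{++})$, which is exactly the scalar-coupling device you describe through $w=\sum_jX_{j+}u_j$. Both then feed this inverse bound, the score bound $|g_k(\vtalpha)|\lesssim(C_0+1)\varepsilon_0X_{k+}$, and a Lipschitz estimate on the Hessian over the ball, into a Newton-type existence argument.

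The one genuine but minor difference is the wrapper: you run a chord iteration with fixed $A=A(\vtalpha)$ and invoke Banach's contraction principle, whereas the paper rescales to $\mF(\vx)=\mD^{-1}\nabla\ell(\vx)$ and cites Kantorovich's theorem (with the Gragg--Tapia optimal bounds) for the full Newton iteration. Your version is slightly more elementary and self-contained; the paper's version offloads the convergence analysis to a citation but yields the same $\|\vhalpha-\vtalpha\|_\infty$ bound via $t^\ast\le2\delta$. Neither buys a materially sharper constant; the specific values $C=10(C_0+1)$ and $\bar\varepsilon_0=\{15(C_0+1)\}^{-2}$ emerge from the same balance you identify, namely that the contraction factor scales like $(C_0+1)^2\varepsilon_0$ once the Hessian variation over a ball of radius $(C_0+1)\varepsilon_0$ is accounted for.
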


As shown in the Appendix, the corresponding approximation result for the maximum likelihood estimate of $p_{ij}$ is a straightforward consequence, and an approximation for the log-likelihood itself also follows.

\begin{corollary}\label{C:null-pij-approx}
  Suppose the conditions of Theorem~\ref{T:null-mle-approx} hold and that
  $\hat p_{ij} = p_{ij}(\vhalpha)$.
  If $\varepsilon_0 \leq \bar \varepsilon_0$, then
  \[
    \left|\frac{\hat p_{ij} - \approxp{p}_{ij}}{\approxp{p}_{ij}}\right|
      \leq C_1 \, \varepsilon_0,
  \]
  where $C_1 = 24 \, (C_0 + 1)$.
\end{corollary}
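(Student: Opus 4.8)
The plan is to work directly on the multiplicative ratio $\hat p_{ij} / \approxp{p}_{ij}$, exploiting the fact that both edge probabilities are exponentials of node-parameter sums. First I would record the key identity $\approxp{p}_{ij} = \exp(\approxp{\alpha}_i + \approxp{\alpha}_j)$, which is immediate from the definition~\eqref{E:chung-lu-alphai} of $\vtalpha$, together with $\hat p_{ij} = \exp\{\hat\alpha_i + \hat\alpha_j + \varepsilon_{ij}(\hat\alpha_i, \hat\alpha_j)\}$ from~\eqref{E:null-general}. Writing $\delta_i = \hat\alpha_i - \approxp{\alpha}_i$, these combine to give
\[
  \frac{\hat p_{ij}}{\approxp{p}_{ij}}
    = \exp\bigl\{ \delta_i + \delta_j + \varepsilon_{ij}(\hat\alpha_i, \hat\alpha_j) \bigr\},
\]
so that the entire problem reduces to bounding the exponent $t = \delta_i + \delta_j + \varepsilon_{ij}(\hat\alpha_i, \hat\alpha_j)$ and then passing through the elementary inequality $|\exp(t) - 1| \leq |t| \exp(|t|)$.

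The two node-parameter discrepancies are controlled immediately by Theorem~\ref{T:null-mle-approx}: since $|\delta_i| \leq C\varepsilon_0$ with $C = 10(C_0+1)$, we have $|\delta_i + \delta_j| \leq 2C\varepsilon_0 = 20(C_0+1)\varepsilon_0$. The term requiring care is $\varepsilon_{ij}(\hat\alpha_i, \hat\alpha_j)$, for which Assumption~\ref{A:small-eps} only supplies the sub-exponential bound $|\varepsilon_{ij}(\hat\alpha_i, \hat\alpha_j)| \leq C_0 \exp(\hat\alpha_i + \hat\alpha_j)$. The crux of the argument is to convert this into a bound of order $\varepsilon_0$. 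Here the sparsity hypothesis $X_{i+}^2 \leq \varepsilon_0 X_{++}$ is decisive: it gives $\approxp{p}_{ij} = X_{i+}X_{j+}/X_{++} \leq \varepsilon_0$, that is, $\exp(\approxp{\alpha}_i + \approxp{\alpha}_j) \leq \varepsilon_0$, and combining this with the already-established control on $\delta_i + \delta_j$ yields
\[
  \exp(\hat\alpha_i + \hat\alpha_j)
    = \exp(\approxp{\alpha}_i + \approxp{\alpha}_j)\, \exp(\delta_i + \delta_j)
    \leq \varepsilon_0 \exp(2C\varepsilon_0),
\]
so that $|\varepsilon_{ij}(\hat\alpha_i, \hat\alpha_j)| \leq C_0 \varepsilon_0 \exp(2C\varepsilon_0)$.

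Assembling the pieces and using $C_0 \leq C_0 + 1$ gives $|t| \leq (C_0+1)\{20 + \exp(2C\varepsilon_0)\}\varepsilon_0$. It then remains only to verify that the constraint $\varepsilon_0 \leq \bar\varepsilon_0 = \{15(C_0+1)\}^{-2}$ forces $\exp(2C\varepsilon_0)$ to be uniformly close to one: a routine check shows $2C\varepsilon_0 \leq 4/45$, whence $\exp(2C\varepsilon_0) \leq 1.1$ and $|t| \leq 21.1\,(C_0+1)\varepsilon_0$. The same smallness makes $|t| \leq 0.1$ and hence $\exp(|t|) \leq 1.1$, so that $|\exp(t) - 1| \leq |t| \exp(|t|) \leq 23.3\,(C_0+1)\varepsilon_0 \leq C_1 \varepsilon_0$ with $C_1 = 24(C_0+1)$, as claimed. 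The only genuine obstacle is the middle step—neutralizing the $\varepsilon_{ij}$ contribution—since that is precisely where Theorem~\ref{T:null-mle-approx} and the sparsity condition must be married to tame the sub-exponential growth permitted by Assumption~\ref{A:small-eps}; everything after that is bookkeeping of constants.
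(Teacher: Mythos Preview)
Your proposal is correct and follows essentially the same route as the paper's proof: both write the relative error as $|\exp(t)-1|$ with $t=(\hat\alpha_i-\approxp\alpha_i)+(\hat\alpha_j-\approxp\alpha_j)+\varepsilon_{ij}(\hat\alpha_i,\hat\alpha_j)$, bound the $\varepsilon_{ij}$ term via Assumption~\ref{A:small-eps} and the sparsity condition to obtain $|t|\leq 21.1(C_0+1)\varepsilon_0$, and finish with $|e^t-1|\leq |t|e^{|t|}$. One small numerical slip: the implication ``$|t|\leq 0.1$ hence $\exp(|t|)\leq 1.1$'' is not literally valid ($e^{0.1}\approx 1.105$), but your conclusion still holds since the sharper bound $|t|\leq 21.1/225<\log 1.1$ does give $e^{|t|}\leq 1.1$.
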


\begin{corollary}\label{C:null-ll-approx}
  Suppose the conditions of Theorem~\ref{T:null-mle-approx} hold, that
  $\hat \ell$ is the log-likelihood under $\mathcal{M}_\varepsilon$, evaluated
  at $\hat p$, and that $\tilde \ell$ is defined analogously as
  \[
    \tilde \ell = \sum_{i < j} X_{ij} \log \tilde p_{ij} - (1 - X_{ij}) \log (1 - \tilde p_{ij}).
  \]
  If $\varepsilon_0 \leq \bar \varepsilon_0$, then
  \[
    \left|\frac{\hat \ell - \tilde \ell}{\tilde \ell}\right|
      \leq C_2 \, \varepsilon_0,
  \]
  where $C_2 = 49 \, (C_0 + 1).$
\end{corollary}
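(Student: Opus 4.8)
The plan is to bound the numerator $|\hat\ell - \tilde\ell|$ and the denominator $|\tilde\ell|$ separately, and to show that a common factor of $X_{++}$ cancels, leaving a bound of order $\varepsilon_0$. Throughout I write $\hat p_{ij} = \tilde p_{ij}(1 + \delta_{ij})$, where Corollary~\ref{C:null-pij-approx} supplies $|\delta_{ij}| \leq C_1 \varepsilon_0$; since $\varepsilon_0 \leq \bar\varepsilon_0$ one checks that $C_1 \varepsilon_0 \leq 24/225 < 1/4$, so every $\delta_{ij}$ is uniformly small, and the logarithmic expansions below are controlled.

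First I would decompose the numerator according to whether an edge is present. Writing the log-likelihood as in~\eqref{E:Bern-log-lik},
\[
  \hat\ell - \tilde\ell
    = \sum_{i<j}
        X_{ij} \log\frac{\hat p_{ij}}{\tilde p_{ij}}
        + (1 - X_{ij}) \log\frac{1 - \hat p_{ij}}{1 - \tilde p_{ij}}.
\]
For the edge-present terms ($X_{ij} = 1$) the summand is $\log(1 + \delta_{ij})$, bounded in absolute value by $|\delta_{ij}|/(1 - |\delta_{ij}|) \leq C_1 \varepsilon_0 / (1 - C_1 \varepsilon_0)$; as there are $X_{++}/2$ such terms, their total is at most a constant multiple of $C_1 \varepsilon_0 X_{++}$. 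For the edge-absent terms ($X_{ij} = 0$) I would write $(1 - \hat p_{ij})/(1 - \tilde p_{ij}) = 1 - \tilde p_{ij} \delta_{ij}/(1 - \tilde p_{ij})$ and bound the logarithm by $\tilde p_{ij} |\delta_{ij}| / (1 - \tilde p_{ij})$, up to the same $(1 - C_1\varepsilon_0)^{-1}$ correction. Summing and using $\sum_{i<j} \tilde p_{ij} = \tfrac12 \{ X_{++} - \sum_i X_{i+}^2/X_{++} \} \leq X_{++}/2$, which follows directly from the form of $\tilde p_{ij}$ in~\eqref{E:chung-lu-pij}, again yields a contribution of order $C_1 \varepsilon_0 X_{++}$. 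Hence $|\hat\ell - \tilde\ell| \leq (\text{const}) \cdot C_1 \varepsilon_0 X_{++}$.

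Next I would lower-bound the denominator. Because $1 \leq X_{i+}^2 \leq \varepsilon_0 X_{++}$, the plug-in probability satisfies $\tilde p_{ij} = X_{i+} X_{j+}/X_{++} \leq \varepsilon_0$, so $-\log \tilde p_{ij} \geq \log(1/\varepsilon_0)$. Every summand of $-\tilde\ell$ is nonnegative, so retaining only the $X_{++}/2$ edge-present terms gives $|\tilde\ell| \geq \tfrac12 X_{++} \log(1/\varepsilon_0)$, a lower bound of order $X_{++}$ since $\varepsilon_0 \leq \bar\varepsilon_0 = \{15(C_0+1)\}^{-2}$ forces $\log(1/\varepsilon_0) \geq \log 225$. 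Dividing the two estimates cancels $X_{++}$ and leaves $|\hat\ell - \tilde\ell|/|\tilde\ell| \leq 2 (\text{const}) C_1 \varepsilon_0 / \log(1/\varepsilon_0)$, which is of the desired form $C_2 \varepsilon_0$.

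The main obstacle will be the constant bookkeeping needed to reach the stated value $C_2 = 49(C_0+1)$ rather than a mere $O(\varepsilon_0)$ bound: one must track the $(1 - C_1\varepsilon_0)^{-1}$ factors from the two logarithmic expansions, the factor $\log 225$ gained in the denominator, and their interplay with $C_1 = 24(C_0+1)$, folding all of these into a single clean constant. A secondary point requiring care is confirming that the edge-absent contribution to the numerator is genuinely $O(X_{++})$ rather than larger; this rests on the identity $\sum_{i<j} \tilde p_{ij} \leq X_{++}/2$ above, which is precisely where the sparsity hypothesis $X_{i+}^2 \leq \varepsilon_0 X_{++}$ and the explicit outer-product form of $\tilde p_{ij}$ enter.
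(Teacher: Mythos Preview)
Your proposal is correct and, on the numerator side, matches the paper almost line for line: the paper also splits $\hat\ell-\tilde\ell$ into edge-present and edge-absent contributions, bounds the first by $C_1\varepsilon_0\sum_{i<j}X_{ij}$ and the second by $C_1\frac{\varepsilon_0}{1-\varepsilon_0}\sum_{i<j}(1-X_{ij})\tilde p_{ij}$, and then collapses both into $C_1\frac{\varepsilon_0}{1-\varepsilon_0}X_{++}$ via $\sum_{i<j}\tilde p_{ij}\leq X_{++}/2$.

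Where you genuinely diverge is the denominator. The paper lower-bounds $|\tilde\ell|$ by rewriting $-\sum_{i<j}X_{ij}\log\tilde p_{ij}=\tfrac{1}{2}X_{++}\log X_{++}-\sum_i X_{i+}\log X_{i+}$ and then arguing this exceeds $\tfrac{1}{2}X_{++}\log(n^2/X_{++})\geq X_{++}/2$. Your route is more direct: from $\tilde p_{ij}=X_{i+}X_{j+}/X_{++}\leq\varepsilon_0$ you get $|\tilde\ell|\geq\tfrac{1}{2}X_{++}\log(1/\varepsilon_0)$, which is both simpler and sharper (an extra factor $\log(1/\varepsilon_0)\geq\log 225$). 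This makes the final constant bookkeeping easier, not harder---indeed, with $\log(1/\varepsilon_0)\geq 5$ your bound immediately gives a constant well under $49(C_0+1)$, so the ``main obstacle'' you flag is less severe than you suggest.
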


Notably, the results of Theorem~\ref{T:null-mle-approx} and
Corollaries~\ref{C:null-pij-approx} and~\ref{C:null-ll-approx} are not probabilistic.
The only assumption on $\mX$ is that the nodal degrees are nonzero and
small relative to the total number of edges.  Thus, these results hold even when
the true model for $\mX$ is not specified by
$\mathcal{M}_\varepsilon$; i.e., they are robust to model
misspecification.

\section{Proof of Theorem~\ref{T:null-mle-approx}}\label{S:proof-of-theorem}

We now outline the proof of Theorem~\ref{T:null-mle-approx}, deferring requisite technical lemmas to the Appendix.  We employ \citefullauthor{kantorovich1948functional}'s (\citeyear{kantorovich1948functional}) analysis of Newton's method, specifically the optimal error bounds given by \cite{gragg1974optimal}, whose notation we adopt below for ease of reference.

Our strategy is to use the Kantorovich Theorem to establish the existence of a maximum likelihood estimate $\vhalpha$ of $\valpha$ in a neighborhood of $\vtalpha$, which in turn can be obtained by applying Newton's method with $\vtalpha$ as the initial point.  If we are able to establish the necessary hypotheses, then this theorem will enable us to bound the distance between $\vtalpha$ and $\vhalpha$ as required.  To apply it we require a Lipschitz condition on the Jacobian of the corresponding system of equations near $\vtalpha$, as well as boundedness conditions on the inverse Hessian evaluated at $\vtalpha$ and also the initial step size of Newton's method from $\vtalpha$.

As we show below, the key to these conditions is an approximation of the Hessian by a diagonal-plus-rank-1-matrix formed from $\vtalpha$.  First, recall the data log-likelihood under $\mathcal{M}_\varepsilon$ from~\eqref{E:Bern-log-lik}; its gradient and Hessian with respect to $\valpha$ may be written component-wise as
  \begin{align*}
    \frac{\partial \ell}{\partial \alpha_k}
      & =
      \sum_{j \neq k}
        (X_{kj} - e^{\alpha_k + \alpha_j})
      +
      \sum_{j \neq k}
        X_{kj} f_{kj}
      +
      \sum_{j \neq k}
        e^{\alpha_k + \alpha_j} \bar f_{kj},
      \\
      \frac{\partial^2 \ell}{\partial \alpha_k \partial \alpha_l}
      & =
      \begin{cases}
      - e^{\alpha_k + \alpha_l}
      + X_{kl} \frac{\partial f_{kl}}{\partial \alpha_l}
      + e^{\alpha_k + \alpha_l}
        \Big(\bar f_{kl} + \frac{\partial \bar f_{kl}}{\partial \alpha_l}\Big)
      & \text{if $k \neq l$,}
      \\
      - \sum_{j \neq k} e^{\alpha_k + \alpha_j}
      + \sum_{j \neq k}
          X_{kj} \frac{\partial f_{kj}}{\partial \alpha_k}
      + \sum_{j \neq k}
          e^{\alpha_k + \alpha_j}
          \Big(\bar f_{kj} + \frac{\partial \bar f_{kj}}{\partial \alpha_k}\Big)
      & \text{if $k = l$,}
      \end{cases}
  \end{align*}
  where
  \begin{gather*}
    f_{ij}
      = f_{ij}(\alpha_i, \alpha_j)
      = \frac{\partial \varepsilon_{ij}}{\partial \alpha_i}
        + \frac{p_{ij}}{1 - p_{ij}}
          \Big(1 + \frac{\partial \varepsilon_{ij}}{\partial \alpha_i}\Big),
    \\
    \bar f_{ij}
      = \bar f_{ij}(\alpha_i, \alpha_j)
      = 1 - \exp(\varepsilon_{ij}) - \exp(\varepsilon_{ij}) f_{ij}.
  \end{gather*}

  The form of these expressions suggests that when $\varepsilon_{ij}$ and $p_{ij}$ are small, and $f_{ij}, \bar f_{ij}$ and their derivatives controlled, an approximation of $\nabla^2 \ell(\valpha)$ based on terms $\approxp{p}_{ij} = \exp(\approxp{\alpha}_i + \approxp{\alpha}_j)$  will be effective in a neighborhood of $\vtalpha$.   
  Defining such a neighborhood parameterized by $r \geq 1$ as $\mathcal{N}_r = \{ \valpha : \| \valpha - \vtalpha \|_\infty \leq (\log r) / 2 \}$, Lemmas 1--5 in the Appendix provide the necessary approximation bounds as a function of $r$.

  Now define vector $\vd = (d_1, \ldots, d_n)$ with $d_i = X_{i+}$, and, setting
  $\mD = \diag(\vd)$, write
  \[
    \nabla^2 \ell(\vtalpha)
     =  \mH
        \big(\mI + \mE \big),
  \]
  where we choose $\mH$ to be
  \[
    \mH = -\Big(\mD + \frac{1}{X_{++}} \vd \vd^\trans \Big).
  \]
  The Sherman-Morrison formula gives
  \[
    \mH^{-1}
    =
       -\mD^{-1} + \frac{1}{2X_{++}} \vone \vone^\trans,
  \]
  and with this expression we may bound the norm of $\mE$, according to Lemma~\ref{L:E-norm-bounds} in the Appendix.

  To conclude the proof of Theorem~\ref{T:null-mle-approx},
  consider the system of equations
  \(
    \mF(\vx) = \mD^{-1} [ \nabla \ell(\vx) ]
  \)
  with derivative matrix
  \(
    \mF'(\vx) = \mD^{-1} [ \nabla^2 \ell(\vx) ].
  \)
  Equipped with the norm $\| \cdot \|_\infty$, Lemmas~\ref{L:inv-hess} and~\ref{L:newton-step} then establish the bounding constants $\varkappa$ and $\delta$, and Lemma~\ref{L:hess-lipschitz} the Lipschitz constant $\lambda$, necessary to apply Kantorovich's result, taking initial Newton iterate $\vx_0 = \vtalpha$ and defining
  subsequent iterates recursively by
  \(
    \vx_{k+1}
      = \vx_k - [\mF'(\vx_k)]^{-1} \mF(\vx_k)
      = \vx_k - [\nabla^2 \ell(\vx_k)]^{-1} \nabla \ell(\vx_k).
  \)
  Lemmas~\ref{L:inv-hess} and~\ref{L:newton-step} require $\| \mE \|_\infty < 1$, and 
  Lemma~\ref{L:newton-step} further relates $\delta = L_1 \, \varkappa \, \varepsilon_0$, with constant $L_1$ defined in Lemma~\ref{L:grad-hess-null-bounds}.

  If we define
  \(
     h = 2 \varkappa \lambda \delta
  \)
  and
  \(
    t^\ast = (2/h)(1 - \sqrt{1 - h}) \, \delta,
  \)
  then Kantorovich's Theorem asserts that when $h \leq 1$ and $t^\ast \leq (\log r)/2$,
  each iterate $\vx_k$ is
  in $\mathcal{N}_r$ and $\vx^\ast$ is well defined, in the sense that as $k$ increases, $\vx_k$
  converges to a limit point $\vx^\ast$ such that $\mF(\vx^\ast) = 0$.
  The matrix $\mD$ is of full rank, and so in this case $\nabla \ell(\vx^\ast) = 0$ as
  well, implying that $\vx^\ast$ is a solution to the likelihood equation.  Thus we will take $\vhalpha = \vx^\ast$, and the existence of a maximum likelihood estimate will be established if we can show that $h \leq 1$ and $t^\ast \leq (\log r)/2$.
    
  To show this, set $r = \exp(4 \, \delta)$, so that $t^\ast \leq 2 \delta = (\log r)/2$.
  It is then straightforward to verify that if $\varepsilon_0 \leq \bar \varepsilon_0 = \{15 \, (C_0 + 1)\}^{-2}$, where $C_0$ is as given in Assumption~\ref{A:small-eps}, then $\| \mE \|_\infty \leq 10 \, (C_0 + 1) \, \varepsilon_0 < 1$, satisfying the requirements of Lemmas~\ref{L:inv-hess} and~\ref{L:newton-step}.  Moreover, if also $r = \exp(4 \, \delta)$, then $\lambda \leq 16 \, (C_0 + 1)$, $h \leq 1$, and $L_1 \, \varkappa \leq 5 \, (C_0 + 1)$.
  By \cite{gragg1974optimal},
  \(
    \| \vx^\ast - \vx_k \|_\infty
      \leq
        2^{-k + 1} \, \| \vx_1 - \vx_0 \|_\infty,
  \)
  and so the result of Theorem~\ref{T:null-mle-approx} then follows, since
  \(
    \| \vx_1 - \vx_0 \|_\infty \leq \delta = L_1 \, \varkappa \, \varepsilon_0.
  \)

\section{Discussion}\label{S:discussion}

\subsection{Implications}

The main implication of the above results is that a broad class of null models for undirected networks give rise to roughly the same maximum likelihood estimates of edge probabilities. For practitioners looking to capture degree heterogeneity in their null models, then, this provides verifiable assurances that the particular choice of null model will not give meaningfully different conclusions, provided the null model can be written in the form $\mathcal{M}_\varepsilon$ such that Assumption~\ref{A:small-eps} is satisfied, and the dataset is sufficiently sparse. Empirically, as we show below, our approximation bounds and sparsity conditions appear conservative in practice.  

When comparing to the extant literature, two recent results warrant discussion.  First is that of \cite{chatterjee2011random}, in which the authors show that a unique maximum likelihood estimate exists with high probability when the model $\mathcal{M}_\text{logit}$ is in force, and give an iterative algorithm that converges geometrically quickly when a solution to the likelihood equation exists.  In contrast, our results are deterministic, and do not require the data to be generated by any particular model.

\cite{rinaldo2011maximum} also address the model $\mathcal{M}_\text{logit}$ and its version for directed graphs, focusing on necessary and sufficient conditions for the existence of a maximum likelihood estimate, and the failure thereof, as a function of the polytope of admissible degree sequences for a given network size.  As with \cite{chatterjee2011random}, however, their existence results are probabilistic in nature; one interpretation of our results in this context is that our sparsity conditions are sufficient to avoid the pathological degree polytope conditions that give rise to the many nonexistence examples considered by \cite{rinaldo2011maximum}.

\subsection{Empirical evaluation}\label{S:empirical}

To evaluate how conservative our sparsity condition and bound on the universal constant $C$ in Theorem~\ref{T:null-mle-approx} appear in practice, we fitted the models $\mathcal{M}_\text{log}$, $\mathcal{M}_\text{cloglog}$, and $\mathcal{M}_\text{logit}$ defined in (\ref{E:null-log-prob}--\ref{E:null-logit-prob}) to nine different network datasets of sizes ranging from $n = 34$ to $n = 7610$:

{\footnotesize
\begin{longtable}{l p{30em}}
  \cite{zachary1977information} & Social ties within a college karate club \\
  \cite{girvan2002community} & Network of American football games between Division IA colleges \\
  \cite{hummon1990analyzing} & Citations among scholarly papers on the subject of centrality in networks \\
  \cite{gleiser2003community} & Collaborations between jazz musicians \\
  \cite{duch2005community} & Metabolic network of \textit{C. elegans} \\
  \cite{adamic2005political} & Hyperlinks between weblogs on US politics \\
  \cite{newman2006finding} & Coauthorships amongst researchers on network
    theory and experiments \\
  \cite{watts1998collective} & Topology of the Western States Power Grid of the U.S.A. \\
  \cite{newman2001structure} & Coauthorships among postings to a High-Energy Theory preprint archive
\end{longtable}
}\vspace{-\baselineskip}%

\noindent
In each case we obtained a maximum-likelihood estimate $\vhalpha$ and compared it to $\vtalpha$. Figure~\ref{F:hep-th-approx} shows the approximation errors in $\hat
\valpha_i$ for the dataset analyzed by \cite{newman2001structure} as a typical example, and Table~\ref{T:cl-approx} summarizes results for all nine datasets.
\begin{figure}
    \centering
    \makebox{\includegraphics[scale=0.6]{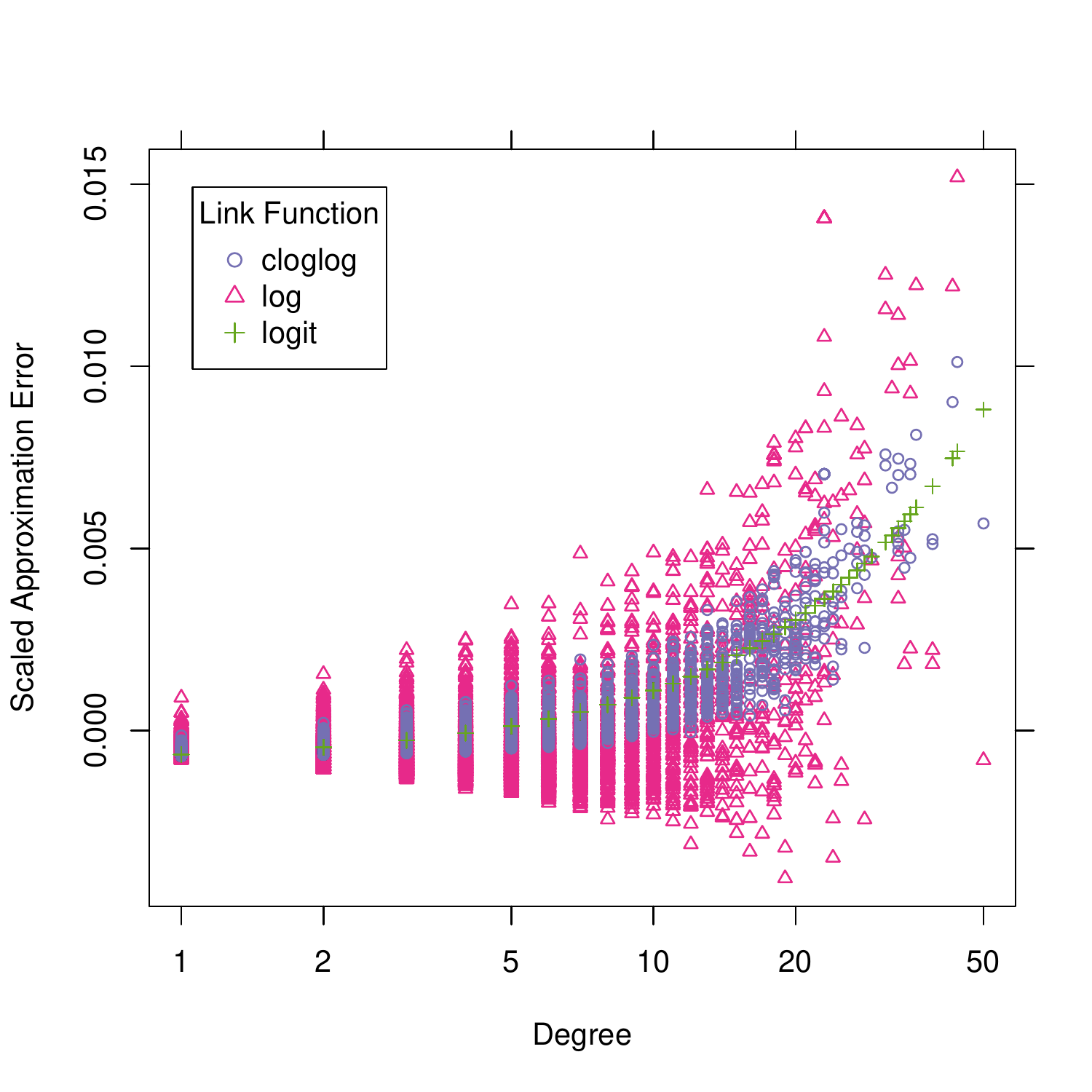}}
    \caption{ Scaled approximation error
      $(\hat \alpha_i - \approxp{\alpha}_i) / (C \varepsilon_0)$ plotted as a
      function of degree $X_{i+}$ for the network analyzed by \cite{newman2001structure},
      using the $\cloglog$, $\log$, and $\logit$ links. }\label{F:hep-th-approx}
\end{figure}
\begin{table}
  \caption{
    Approximation error in terms of $\varepsilon_0$ for several datasets.  Percentage valid
    is defined as $(100/n) \sum_{i} I(X_{i+}^2 / X_{++} \leq \bar
    \varepsilon_0)$.  For Theorem~\ref{T:null-mle-approx} to apply,
    this value should be equal to $100$; however, the corresponding approximation results hold across the range of datasets considered.
  }
  \label{T:cl-approx}
  \tiny
  \centering
  \singlespace
  \makebox[0.66\textwidth]{\begin{tabular}{lrrrlrrr}
\toprule
\textbf{Dataset}
        & \multicolumn{1}{c}{$n$}
        & \multicolumn{1}{c}{$X_{++}$}
        & \multicolumn{1}{c}{$\max X_{i+}$}
        & \textbf{Link}
        & \textbf{Valid \%}
        & \multicolumn{1}{c}{$\frac{\|\vhalpha - \vtalpha\|_{2}}{\sqrt{n} \, C \, \varepsilon_0}$}
        & \multicolumn{1}{c}{$\frac{\|\vhalpha - \vtalpha\|_{\infty}}{C \, \varepsilon_0}$}
 \\
\midrule
\cite{zachary1977information}               &     34 &    156 &     17 \phantom{$i+$} & $\cloglog$ & 0 \hphantom{ \%} &  0.004 \phantom{$\|_2$} &   0.01 \phantom{$\|_\infty$} \\
                               &        &        &        \phantom{$i+$} & $\log$ & 0 \hphantom{ \%} &  0.006 \phantom{$\|_2$} &   0.02 \phantom{$\|_\infty$} \\
                               &        &        &        \phantom{$i+$} & $\logit$ & 0 \hphantom{ \%} &  0.009 \phantom{$\|_2$} &   0.03 \phantom{$\|_\infty$} \\
\addlinespace
\cite{girvan2002community}             &    115 &   1226 &     12 \phantom{$i+$} & $\cloglog$ & 0 \hphantom{ \%} &   0.02 \phantom{$\|_2$} &   0.02 \phantom{$\|_\infty$} \\
                               &        &        &        \phantom{$i+$} & $\log$ & 0 \hphantom{ \%} &  0.005 \phantom{$\|_2$} &   0.01 \phantom{$\|_\infty$} \\
                               &        &        &        \phantom{$i+$} & $\logit$ & 0 \hphantom{ \%} &   0.02 \phantom{$\|_2$} &   0.03 \phantom{$\|_\infty$} \\
\addlinespace
\cite{hummon1990analyzing}           &    118 &   1226 &     66 \phantom{$i+$} & $\cloglog$ & 10 \hphantom{ \%} &  0.003 \phantom{$\|_2$} &   0.01 \phantom{$\|_\infty$} \\
                               &        &        &        \phantom{$i+$} & $\log$ & 19 \hphantom{ \%} &  0.002 \phantom{$\|_2$} &   0.01 \phantom{$\|_\infty$} \\
                               &        &        &        \phantom{$i+$} & $\logit$ & 10 \hphantom{ \%} &  0.004 \phantom{$\|_2$} &   0.02 \phantom{$\|_\infty$} \\
\addlinespace
\cite{gleiser2003community}                 &    198 &   5484 &    100 \phantom{$i+$} & $\cloglog$ & 6 \hphantom{ \%} &  0.004 \phantom{$\|_2$} &   0.02 \phantom{$\|_\infty$} \\
                               &        &        &        \phantom{$i+$} & $\log$ & 7 \hphantom{ \%} &  0.002 \phantom{$\|_2$} &   0.02 \phantom{$\|_\infty$} \\
                               &        &        &        \phantom{$i+$} & $\logit$ & 4 \hphantom{ \%} &  0.005 \phantom{$\|_2$} &   0.02 \phantom{$\|_\infty$} \\
\addlinespace
\cite{duch2005community}             &    453 &   4050 &    237 \phantom{$i+$} & $\cloglog$ & 5 \hphantom{ \%} &  5e-04 \phantom{$\|_2$} &  0.004 \phantom{$\|_\infty$} \\
                               &        &        &        \phantom{$i+$} & $\log$ & 36 \hphantom{ \%} &  6e-04 \phantom{$\|_2$} &  0.009 \phantom{$\|_\infty$} \\
                               &        &        &        \phantom{$i+$} & $\logit$ & 5 \hphantom{ \%} &  6e-04 \phantom{$\|_2$} &  0.005 \phantom{$\|_\infty$} \\
\addlinespace
\cite{adamic2005political}             &   1224 &  33430 &    351 \phantom{$i+$} & $\cloglog$ & 42 \hphantom{ \%} &  9e-04 \phantom{$\|_2$} &  0.006 \phantom{$\|_\infty$} \\
                               &        &        &        \phantom{$i+$} & $\log$ & 50 \hphantom{ \%} &  0.001 \phantom{$\|_2$} &   0.02 \phantom{$\|_\infty$} \\
                               &        &        &        \phantom{$i+$} & $\logit$ & 38 \hphantom{ \%} &  0.002 \phantom{$\|_2$} &   0.01 \phantom{$\|_\infty$} \\
\addlinespace
\cite{newman2006finding}           &   1461 &   5484 &     34 \phantom{$i+$} & $\cloglog$ & 63 \hphantom{ \%} &  0.002 \phantom{$\|_2$} &   0.01 \phantom{$\|_\infty$} \\
                               &        &        &        \phantom{$i+$} & $\log$ & 75 \hphantom{ \%} &  0.003 \phantom{$\|_2$} &   0.02 \phantom{$\|_\infty$} \\
                               &        &        &        \phantom{$i+$} & $\logit$ & 46 \hphantom{ \%} &  0.001 \phantom{$\|_2$} &   0.01 \phantom{$\|_\infty$} \\
\addlinespace
\cite{watts1998collective}                &   4941 &  13188 &     19 \phantom{$i+$} & $\cloglog$ & 93 \hphantom{ \%} &  0.001 \phantom{$\|_2$} &   0.01 \phantom{$\|_\infty$} \\
                               &        &        &        \phantom{$i+$} & $\log$ & 97 \hphantom{ \%} &  0.002 \phantom{$\|_2$} &   0.02 \phantom{$\|_\infty$} \\
                               &        &        &        \phantom{$i+$} & $\logit$ & 80 \hphantom{ \%} &  0.001 \phantom{$\|_2$} &   0.01 \phantom{$\|_\infty$} \\
\addlinespace
\cite{newman2001structure}               &   7610 &  31502 &     50 \phantom{$i+$} & $\cloglog$ & 87 \hphantom{ \%} &  9e-04 \phantom{$\|_2$} &   0.01 \phantom{$\|_\infty$} \\
                               &        &        &        \phantom{$i+$} & $\log$ & 94 \hphantom{ \%} &  0.001 \phantom{$\|_2$} &   0.02 \phantom{$\|_\infty$} \\
                               &        &        &        \phantom{$i+$} & $\logit$ & 78 \hphantom{ \%} &  8e-04 \phantom{$\|_2$} &  0.009 \phantom{$\|_\infty$} \\
\bottomrule
\end{tabular}
}
\end{table}

Two empirical confirmations of Theorem~\ref{T:null-mle-approx} are that for these
datasets and models, the supremum norm distances $\| \vhalpha - \vtalpha \|_\infty$ are
of order $C \, \varepsilon_0$, while the Euclidean norm distances $\| \vhalpha - \vtalpha \|_2$ are of order $\sqrt{n} \, C \, \varepsilon_0$, with $\varepsilon_0$ taken to be $\max X_{i+}^2 / X_{++}$ in each case.  Here the corresponding constants appear conservative by
factors of order $10^3$ and~$10^2$, respectively, suggesting that our results may in fact hold under less stringent sparsity conditions.

\subsection{Avenues for future work}

The simple, well known, and computationally convenient estimator featured in our results has conceptual as well as computational advantages. As a monotone transformation of the observed degree sequence, it can also be seen to yield a parametric interpretation of the classical degree centrality ranking metric common in social network analysis.  While we do not pursue this approach further here, we also note that the approximation to the Fisher information arising in our proof of Theorem~\ref{T:null-mle-approx} can be used to obtain an approximate asymptotic covariance expression in this setting, avoiding a requisite matrix inversion that may be prohibitively costly for large datasets.

More generally, we expect that the methods used in the proof of Theorem~\ref{T:null-mle-approx} will also find application in investigations of
\emph{alternative} network models.  For example, one can show that a variant of
Assumption~\ref{A:small-eps} holds for the degree-corrected blockmodel of
\cite{karrer2011stochastic} and variations thereof.  We therefore surmise that it may well be possible to establish similar universality results for these models.

\section*{Acknowledgement}

Work supported in part by the National Science Foundation, National Institutes of Health, Army Research Office and the Office of Naval Research, U.S.A.

\appendix
\section*{Appendix}
\label{S:additional-proofs}

\subsection*{Technical Lemmas for Proof of Theorem~\ref{T:null-mle-approx}}

Recall our earlier definition of $\mathcal{N}_r = \{ \valpha : \| \valpha - \vtalpha \|_\infty \leq (\log r) / 2 \}$, defining a neighborhood of $\vtalpha$ parameterized by $r \geq 1$.  For $\valpha \in \mathcal{N}_r$, Lemmas~\ref{L:eps-bounds}--\ref{L:fij-bounds} provide bounds on $\varepsilon_{ij}$ and $p_{ij}$, as well as their first three partial derivatives, along with $f_{ij}, \bar f_{ij}$ and their partials.  Lemmas~\ref{L:grad-hess-null-bounds} and~\ref{L:hessian-diff} provide approximations for derivatives of the log-likelihood at $\vtalpha$, and bounds on the change in its second derivative in a neighborhood of $\vtalpha$.  The bounds in Lemmas~\ref{L:eps-bounds}--\ref{L:hessian-diff} are straightforward to verify, and thus their proofs are omitted.

  \begin{lemma}\label{L:eps-bounds}
    If $\valpha \in \mathcal{N}_r$, then $\varepsilon_{ij}$ and its first
    three partial derivatives are bounded by $C_0 \, \tilde p_{ij} \, r$.
  \end{lemma}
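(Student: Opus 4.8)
The plan is to combine Assumption~\ref{A:small-eps} with the definition of the neighborhood $\mathcal{N}_r$ directly. Assumption~\ref{A:small-eps} supplies, uniformly over the entire parameter space, the bound $|\varepsilon_{ij}| \leq C_0 \exp(\alpha_i + \alpha_j)$, together with the identical bound for each of the first three partial derivatives $\partial \varepsilon_{ij}/\partial \alpha_k$, $\partial^2 \varepsilon_{ij}/(\partial \alpha_k \partial \alpha_l)$, and $\partial^3 \varepsilon_{ij}/(\partial \alpha_k \partial \alpha_l \partial \alpha_m)$. The only remaining task is to convert the right-hand side from $\exp(\alpha_i + \alpha_j)$, evaluated at an arbitrary point of $\mathcal{N}_r$, into the stated quantity $C_0 \, \tilde p_{ij} \, r$.

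To carry this out, I would fix $\valpha \in \mathcal{N}_r$, so that by definition $|\alpha_k - \tilde\alpha_k| \leq (\log r)/2$ for every index $k$; in particular this holds for $k = i$ and $k = j$. Adding the two corresponding inequalities gives $\alpha_i + \alpha_j \leq \tilde\alpha_i + \tilde\alpha_j + \log r$, whence, exponentiating,
\[
  \exp(\alpha_i + \alpha_j)
    \leq \exp(\tilde\alpha_i + \tilde\alpha_j) \, r.
\]
Recalling from~\eqref{E:chung-lu-alphai} and~\eqref{E:chung-lu-pij} that $\tilde\alpha_i = \log X_{i+} - \log \sqrt{X_{++}}$, and hence that $\exp(\tilde\alpha_i + \tilde\alpha_j) = X_{i+} X_{j+}/X_{++} = \tilde p_{ij}$, the right-hand side is exactly $\tilde p_{ij} \, r$. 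Substituting this into the four sub-exponential bounds from Assumption~\ref{A:small-eps} then yields the common bound $C_0 \, \tilde p_{ij} \, r$ for $\varepsilon_{ij}$ and each of its first three partial derivatives, as claimed.

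There is no genuine obstacle here: the content of the lemma is simply the observation that membership in $\mathcal{N}_r$ inflates the sub-exponential envelope $\exp(\alpha_i + \alpha_j)$ by at most the multiplicative factor $r$, and that this is the same factor for every derivative since Assumption~\ref{A:small-eps} uses a single constant $C_0$ throughout. The only points requiring care are purely bookkeeping: that the $\ell_\infty$ radius of $\mathcal{N}_r$ is $(\log r)/2$ rather than $\log r$, so that the two per-coordinate slacks combine to give exactly $\log r$ in the exponent; and that $\tilde p_{ij}$ coincides with $\exp(\tilde\alpha_i + \tilde\alpha_j)$, which is immediate from the definitions~\eqref{E:chung-lu-alphai} and~\eqref{E:chung-lu-pij}. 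This is presumably why the authors describe the bound as straightforward to verify and omit its proof.
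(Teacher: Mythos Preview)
Your proposal is correct and is exactly the straightforward verification the paper has in mind; indeed, the paper omits the proof of this lemma, noting only that it is easy to check, and your argument---applying Assumption~\ref{A:small-eps} and then using $\|\valpha - \vtalpha\|_\infty \leq (\log r)/2$ to bound $\exp(\alpha_i+\alpha_j)$ by $\tilde p_{ij}\, r$---is the natural and essentially unique route.
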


  \begin{lemma}
    If $\valpha \in \mathcal{N}_r$, then
    $p_{ij} \leq P_0 \, \tilde p_{ij}$, where
    $P_0 = P_0(r) = r \, \exp( C_0 \, \varepsilon_0 \, r )$.
    Furthermore,
    \begin{gather*}
      \Big|\frac{\partial p_{ij}}{\partial \alpha_k}\Big|
        \leq P_1 \, \tilde p_{ij}, \quad
      \Big|\frac{\partial^2 p_{ij}}{\partial \alpha_k \partial \alpha_k}\Big|
        \leq P_2 \, \tilde p_{ij}, \quad
      \Big|\frac{\partial^3 p_{ij}}{\partial \alpha_k \partial \alpha_k \partial \alpha_l}\Big|
        \leq P_3 \, \tilde p_{ij}.
    \end{gather*}
    where
    \(
      P_1 = P_0 \cdot (1 + C_0 \, \varepsilon_0 \, r),
    \)
    \(
      P_2 = P_0 \cdot \{ (1 + C_0 \, \varepsilon_0 \, r)^2 + C_0 \, \varepsilon_0 \, r \},
    \)
    and
    \(
      P_3 = P_0 \cdot \{
        (1 + C_0 \, \varepsilon_0 \, r)^3
	+
        (1 + C_0 \, \varepsilon_0 \, r)
	\,
	C_0 \, \varepsilon_0 \, r
	+
	C_0 \, \varepsilon_0 \, r
      \}.
    \)
  \end{lemma}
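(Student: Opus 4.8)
The plan is to derive every bound from two elementary observations combined with Lemma~\ref{L:eps-bounds} and the chain rule. The first observation, which is load-bearing throughout, is that the hypothesis $X_{i+}^2 \leq \varepsilon_0 \, X_{++}$ forces $\tilde p_{ij} \leq \varepsilon_0$ for all $i,j$: since $\tilde p_{ij} = X_{i+} X_{j+} / X_{++} \geq 0$ and $X_{i+}^2, X_{j+}^2 \leq \varepsilon_0 X_{++}$, we have $\tilde p_{ij}^2 = X_{i+}^2 X_{j+}^2 / X_{++}^2 \leq (\varepsilon_0 X_{++})(\varepsilon_0 X_{++})/X_{++}^2 = \varepsilon_0^2$. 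The second observation is that on $\mathcal{N}_r$ the constraint $\| \valpha - \vtalpha \|_\infty \leq (\log r)/2$ gives $\alpha_i + \alpha_j \leq \tilde\alpha_i + \tilde\alpha_j + \log r$, and since $\tilde p_{ij} = \exp(\tilde\alpha_i + \tilde\alpha_j)$ this yields $\exp(\alpha_i + \alpha_j) \leq r\,\tilde p_{ij}$.

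First I would establish the bound on $p_{ij}$ itself. Writing $p_{ij} = \exp(\alpha_i + \alpha_j)\exp(\varepsilon_{ij})$, I combine the second observation with the bound $|\varepsilon_{ij}| \leq C_0 \, \tilde p_{ij} \, r$ of Lemma~\ref{L:eps-bounds}, which by the first observation is at most $C_0 \, \varepsilon_0 \, r$. Hence $\exp(\varepsilon_{ij}) \leq \exp(C_0 \, \varepsilon_0 \, r)$, and so $p_{ij} \leq r \exp(C_0 \, \varepsilon_0 \, r)\,\tilde p_{ij} = P_0 \, \tilde p_{ij}$, as claimed.

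For the derivatives I would set $g_{ij} = \alpha_i + \alpha_j + \varepsilon_{ij}(\alpha_i, \alpha_j)$, so that $p_{ij} = e^{g_{ij}}$, and differentiate, writing $\partial_k = \partial/\partial\alpha_k$. The only indexing subtlety is that $g_{ij}$ depends on $\valpha$ only through $\alpha_i$ and $\alpha_j$, so every partial with respect to an index outside $\{i,j\}$ vanishes; the nonzero derivatives involve only $\partial_k g_{ij} = [k=i] + [k=j] + \partial_k \varepsilon_{ij}$ and the higher partials $\partial^m g_{ij} = \partial^m \varepsilon_{ij}$ for $m \geq 2$. Lemma~\ref{L:eps-bounds} together with the first observation bounds each first partial of $g_{ij}$ by $1 + C_0 \, \varepsilon_0 \, r$ and each second or third partial by $C_0 \, \varepsilon_0 \, r$. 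Substituting these into the standard expansions for successive derivatives of the exponential,
\[
  \partial_k p_{ij} = p_{ij}\,\partial_k g_{ij}, \quad
  \partial_k \partial_l p_{ij} = p_{ij}\,(\partial_k g_{ij}\,\partial_l g_{ij} + \partial_k \partial_l g_{ij}),
\]
and the analogous third-order expansion, then bounding $p_{ij} \leq P_0 \, \tilde p_{ij}$ via the previous paragraph, collects the $g$-derivative factors multiplicatively and produces $P_1$, $P_2$, and $P_3$ as polynomials in $1 + C_0 \, \varepsilon_0 \, r$ and $C_0 \, \varepsilon_0 \, r$.

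The calculation presents no real obstacle; it is purely a matter of careful bookkeeping, and the genuinely essential step is the conversion $\tilde p_{ij} \leq \varepsilon_0$, which is what allows the $i,j$-dependent bounds of Lemma~\ref{L:eps-bounds} to be replaced by the uniform constant factors appearing in $P_0, P_1, P_2, P_3$. For the cleanest accounting I would proceed recursively, bounding each order by differentiating the expression for the previous order and reusing both $p_{ij} \leq P_0 \, \tilde p_{ij}$ and the $g$-derivative bounds at each step.
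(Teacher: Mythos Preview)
Your proposal is correct and is precisely the straightforward verification the paper alludes to; the paper omits the proof of this lemma entirely, stating that the bounds in Lemmas~\ref{L:eps-bounds}--\ref{L:hessian-diff} ``are straightforward to verify.'' The two observations you isolate---$\tilde p_{ij}\leq\varepsilon_0$ from the sparsity hypothesis and $\exp(\alpha_i+\alpha_j)\leq r\,\tilde p_{ij}$ on $\mathcal{N}_r$---together with Lemma~\ref{L:eps-bounds} and the chain-rule expansion of $p_{ij}=e^{g_{ij}}$ are exactly what is needed, and your bookkeeping reproduces the stated constants $P_0,P_1,P_2,P_3$.
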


  \begin{lemma}\label{L:fij-bounds}
    If $\valpha \in \mathcal{N}_r$, then
    \begin{enumerate}
      \item
        \(
           |f_{ij}| \leq F_0 \, \tilde p_{ij}
        \)
        and
        \(
           |\bar f_{ij}| \leq \bar F_0 \, \tilde p_{ij},
        \)
	where
	\(
	  F_0 = C_0 \, r + \frac{P_1}{1 - P_0 \, \varepsilon_0}
	\)
	and
	\(
	  \bar F_0 = C_0 \, r \, \exp(C_0 \, \varepsilon_0 \, r) + F_0;
	\)
      \item
        \(
          \Big|
            \frac{\partial f_{ij}}{\partial \alpha_k}
          \Big|
            \leq F_1 \, \tilde p_{ij}
        \)
        and
        \(
          \Big|
            \frac{\partial \bar f_{ij}}{\partial \alpha_k}
          \Big|
            \leq \bar F_1 \, \tilde p_{ij},
        \)
       where
        \(
          F_1 = C_0 \, r
	      + \varepsilon_0 \, \left(\frac{P_1}{1 - P_0 \, \varepsilon_0}\right)^2
	      + \frac{P_2}{1 - P_0 \, \varepsilon_0}
        \)
        and
        \(
          \bar F_1 = \exp(C_0 \, \varepsilon_0 \, r) \{ C_0 \, r + C_0 \, F_0 \, \varepsilon_0 \, r + F_1\};
        \)
      \item
        \(
          \Big|
            \frac{\partial^2 f_{ij}}{\partial \alpha_k \partial \alpha_l}
          \Big|
            \leq F_2 \, \tilde p_{ij},
        \)
        where
        \(
          F_2 = C_0 \, r
	      + 2 \, \varepsilon_0^2 \, \left(\frac{P_1}{1 - P_0 \, \varepsilon_0}\right)^3
	      + 3 \, \varepsilon_0 \, \frac{P_2 \, P_1}{[1 - P_0 \, \varepsilon_0]^2}
	      + \frac{P_3}{1 - P_0 \, \varepsilon_0}.
        \)
   \end{enumerate}
  \end{lemma}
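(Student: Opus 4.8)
The plan is to reduce every quantity to the bounds already recorded in Lemma~\ref{L:eps-bounds} and in the preceding lemma bounding $p_{ij}$ and its first three partial derivatives, exploiting throughout that $\tilde p_{ij} \leq \varepsilon_0$ for $\valpha \in \mathcal{N}_r$; the latter follows from the hypothesis $X_{i+}^2 \leq \varepsilon_0 \, X_{++}$, since $\tilde p_{ij} = X_{i+} X_{j+} / X_{++} \leq \varepsilon_0$. The starting point is the identity $\partial p_{ij} / \partial \alpha_i = p_{ij}(1 + \partial \varepsilon_{ij} / \partial \alpha_i)$, obtained by differentiating $p_{ij} = \exp(\alpha_i + \alpha_j + \varepsilon_{ij})$; it lets me rewrite $f_{ij} = \partial \varepsilon_{ij} / \partial \alpha_i + (1 - p_{ij})^{-1} \, \partial p_{ij} / \partial \alpha_i$. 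Since $\valpha \in \mathcal{N}_r$ gives $p_{ij} \leq P_0 \tilde p_{ij} \leq P_0 \varepsilon_0$, the denominator obeys $1 - p_{ij} \geq 1 - P_0 \varepsilon_0 > 0$. Bounding the two summands by $C_0 r \tilde p_{ij}$ and $P_1 \tilde p_{ij} / (1 - P_0 \varepsilon_0)$ respectively yields the stated $F_0$.

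For $\bar f_{ij} = 1 - \exp(\varepsilon_{ij})(1 + f_{ij})$, I would split $|\bar f_{ij}| \leq |1 - \exp(\varepsilon_{ij})| + \exp(\varepsilon_{ij}) |f_{ij}|$ and control the first term with the elementary inequality $|1 - e^{x}| \leq |x| \, e^{|x|}$, applied with $|\varepsilon_{ij}| \leq C_0 r \tilde p_{ij} \leq C_0 \varepsilon_0 r$; this produces the factor $C_0 r \exp(C_0 \varepsilon_0 r)$, while the second term contributes the already-established $F_0$, giving $\bar F_0$.

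The derivative bounds follow on differentiating these expressions and applying the product and quotient rules. Differentiating $f_{ij}$ once produces three terms: $\partial^2 \varepsilon_{ij} / (\partial \alpha_k \partial \alpha_i)$; a term $(1 - p_{ij})^{-2} (\partial p_{ij} / \partial \alpha_k)(\partial p_{ij} / \partial \alpha_i)$ from differentiating the quotient; and $(1 - p_{ij})^{-1} \, \partial^2 p_{ij} / (\partial \alpha_k \partial \alpha_i)$. The crucial manoeuvre, used repeatedly, is that a product of two factors each of size $\tilde p_{ij}$ is bounded by $\varepsilon_0 \tilde p_{ij}$, since $\tilde p_{ij}^2 \leq \varepsilon_0 \tilde p_{ij}$, and more generally $\tilde p_{ij}^m \leq \varepsilon_0^{m-1} \tilde p_{ij}$. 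This collapses every bound back to a single factor of $\tilde p_{ij}$ while shedding the surplus powers of $\varepsilon_0$, and it is exactly what turns the middle term into $\varepsilon_0 \{ P_1 / (1 - P_0 \varepsilon_0) \}^2 \tilde p_{ij}$ and reproduces $F_1$. Iterating once more for the second derivative, the quotient rule raises the denominator power to $(1 - p_{ij})^{-3}$ and the product rule generates the combinatorial coefficients $2$ and $3$ multiplying $\varepsilon_0^2 \{ P_1 / (1 - P_0 \varepsilon_0) \}^3$ and $\varepsilon_0 \, P_2 P_1 / (1 - P_0 \varepsilon_0)^2$, together with the term $P_3 / (1 - P_0 \varepsilon_0)$ from the undifferentiated quotient; this matches $F_2$. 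The analogous but lengthier expansion of $\partial \bar f_{ij} / \partial \alpha_k$, combining the product rule on $\exp(\varepsilon_{ij})(1 + f_{ij})$ with the bound for $F_1$, yields $\bar F_1$.

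The main obstacle is purely organizational: repeated differentiation of the rational factor $(1 - p_{ij})^{-1}$ proliferates terms, and the argument requires keeping careful track of which power of $(1 - p_{ij})^{-1}$ and which product of lower-order derivative bounds each term carries, so that every excess factor of $\tilde p_{ij}$ is correctly converted into a power of $\varepsilon_0$ via $\tilde p_{ij} \leq \varepsilon_0$. No term demands an estimate beyond those in Lemma~\ref{L:eps-bounds} and the preceding $p_{ij}$ bounds; the content lies entirely in this bookkeeping, which is why the verification is routine.
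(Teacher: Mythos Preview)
Your proposal is correct and follows precisely the routine verification the paper alludes to; indeed, the paper omits the proof entirely, stating that the bounds in Lemmas~\ref{L:eps-bounds}--\ref{L:hessian-diff} are ``straightforward to verify.'' Your identification of the rewriting $f_{ij} = \partial \varepsilon_{ij}/\partial \alpha_i + (1-p_{ij})^{-1}\,\partial p_{ij}/\partial \alpha_i$ and of the collapsing device $\tilde p_{ij}^m \leq \varepsilon_0^{m-1}\,\tilde p_{ij}$ is exactly what makes the bookkeeping go through.
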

  
    \begin{lemma}\label{L:grad-hess-null-bounds}
    The following approximations for the derivatives of the log-likelihood at
    $\vtalpha$ hold:
    \begin{enumerate}
      \item
        \(
          \left|
            \frac{\partial \ell}{\partial \alpha_k}(\vtalpha)
          \right|
            \leq  L_1 \, X_{k+} \, \varepsilon_0,
        \)
        where
        \(
          L_1 = 1 + F_0(1) + \bar F_0(1);
        \)
      \item
        \(
          \left|
            \frac{\partial^2 \ell}
                 {\partial \alpha_k \partial \alpha_l}
              (\vtalpha)
            +
            \approxp{p}_{kl}
          \right|
            \leq L_2 \, X_{kl} \, \varepsilon_0
               + \bar L_2 \, \tilde p_{kl} \, \varepsilon_0,
        \)
        where
        \(
          L_2 = F_1(1)
        \)
        and
        \(
          \bar L_2 = \bar F_0(1) + \bar F_1(1)
        \),
	provided $k \neq l$;
     \item
        \(
          \left|
            \frac{\partial^2 \ell}{\partial \alpha_k^2}(\vtalpha)
            +
            X_{k+}
            +
            \approxp{p}_{kk}
          \right|
            \leq  L_3 \, X_{k+} \, \varepsilon_0,
        \)
        where
        \(
          L_3 = 2 + L_2 + \bar L_2.
        \)
  \end{enumerate}
  \end{lemma}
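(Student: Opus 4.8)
My plan is to begin from the explicit component-wise formulas for $\nabla\ell$ and $\nabla^2\ell$ displayed above, evaluate each at $\vtalpha$, and bound the resulting summands term by term using Lemmas~\ref{L:eps-bounds}--\ref{L:fij-bounds}. Since $\vtalpha$ is the center of the neighborhood, it lies in $\mathcal{N}_1$, so those lemmas apply with $r=1$; this accounts for the appearance of the constants $F_0(1)$, $\bar F_0(1)$, $F_1(1)$, and $\bar F_1(1)$. Two elementary facts drive every estimate. First, at $\vtalpha$ we have $e^{\approxp{\alpha}_k+\approxp{\alpha}_j}=\tilde p_{kj}=X_{k+}X_{j+}/X_{++}$, and since each $X_{i+}\le(\varepsilon_0 X_{++})^{1/2}$ under the sparsity hypothesis, $X_{k+}X_{j+}\le\varepsilon_0 X_{++}$ and hence $\tilde p_{kj}\le\varepsilon_0$ for every pair, including $\tilde p_{kk}=X_{k+}^2/X_{++}$. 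Second, the identity $\sum_{j\neq k}\tilde p_{kj}=X_{k+}(1-X_{k+}/X_{++})$ from the main text, combined with $\tilde p_{kj}\le\varepsilon_0$ and $X_{k+}\ge 1$, collapses the two sums recurring throughout into $\sum_{j\neq k}X_{kj}\,\tilde p_{kj}\le\varepsilon_0 X_{k+}$ and $\sum_{j\neq k}\tilde p_{kj}^2\le\varepsilon_0 X_{k+}$.

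For part (i) I would split $\partial\ell/\partial\alpha_k(\vtalpha)$ into the Poisson score $\sum_{j\neq k}(X_{kj}-\tilde p_{kj})$, the term $\sum_{j\neq k}X_{kj}f_{kj}$, and the term $\sum_{j\neq k}\tilde p_{kj}\bar f_{kj}$. The first equals exactly $X_{k+}^2/X_{++}\le\varepsilon_0\le X_{k+}\varepsilon_0$, supplying the leading $1$ in $L_1$; applying $|f_{kj}|\le F_0(1)\tilde p_{kj}$ and $|\bar f_{kj}|\le\bar F_0(1)\tilde p_{kj}$ together with the two collapsed sums bounds the remaining terms by $F_0(1)\varepsilon_0 X_{k+}$ and $\bar F_0(1)\varepsilon_0 X_{k+}$, giving $L_1=1+F_0(1)+\bar F_0(1)$. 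Part (iii) follows the same pattern once the cancellation is noted: the diagonal Hessian contains $-\sum_{j\neq k}\tilde p_{kj}=-X_{k+}+X_{k+}^2/X_{++}$, so adding the target terms $X_{k+}$ and $\tilde p_{kk}=X_{k+}^2/X_{++}$ leaves $2X_{k+}^2/X_{++}\le 2X_{k+}\varepsilon_0$ together with the derivative terms, which $|\partial f_{kj}/\partial\alpha_k|\le F_1(1)\tilde p_{kj}$ and $|\partial\bar f_{kj}/\partial\alpha_k|\le\bar F_1(1)\tilde p_{kj}$ control as before to yield $L_3=2+F_1(1)+\bar F_0(1)+\bar F_1(1)=2+L_2+\bar L_2$. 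Part (ii) is the single-summand version for $k\neq l$: after cancelling $-\tilde p_{kl}$ against the target $+\tilde p_{kl}$, the residual $X_{kl}\,\partial f_{kl}/\partial\alpha_l+\tilde p_{kl}(\bar f_{kl}+\partial\bar f_{kl}/\partial\alpha_l)$ is bounded using $\tilde p_{kl}\le\varepsilon_0$ by $F_1(1)X_{kl}\varepsilon_0+\{\bar F_0(1)+\bar F_1(1)\}\tilde p_{kl}\varepsilon_0$, matching $L_2 X_{kl}\varepsilon_0+\bar L_2\tilde p_{kl}\varepsilon_0$.

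Once the two driving facts are in place the argument is largely bookkeeping, so I do not anticipate a genuine obstacle; the step requiring the most care is the exact accounting of the non-error terms, in particular recognizing that the discrepancy $X_{k+}-\sum_{j\neq k}\tilde p_{kj}$ equals $\tilde p_{kk}=X_{k+}^2/X_{++}$ and tracking how, in part (iii), it combines with the target additions $+X_{k+}$ and $+\tilde p_{kk}$ to leave a clean factor of $2$. A secondary subtlety is keeping the two families of residual bounds distinct --- those summed against $\sum_{j\neq k}X_{kj}$ to give an $X_{k+}\varepsilon_0$ contribution versus those summed against $\sum_{j\neq k}\tilde p_{kj}$ through a squared probability --- since conflating them would destroy the separation between $L_2$ and $\bar L_2$ in part (ii).
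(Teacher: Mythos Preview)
Your proposal is correct and is precisely the direct verification the paper has in mind: the paper omits the proof of this lemma entirely, stating only that ``the bounds in Lemmas~\ref{L:eps-bounds}--\ref{L:hessian-diff} are straightforward to verify, and thus their proofs are omitted.'' Your argument --- evaluating the displayed component-wise gradient and Hessian formulas at $\vtalpha\in\mathcal{N}_1$, cancelling the leading terms against the targets, and bounding the residuals via Lemma~\ref{L:fij-bounds} together with $\tilde p_{kj}\le\varepsilon_0$, $\sum_{j\neq k}X_{kj}\tilde p_{kj}\le\varepsilon_0 X_{k+}$, and $\sum_{j\neq k}\tilde p_{kj}^{2}\le\varepsilon_0 X_{k+}$ --- supplies exactly those omitted details and reproduces the stated constants $L_1$, $L_2$, $\bar L_2$, and $L_3$.
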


  \begin{lemma}\label{L:hessian-diff}
    If $\valpha, \valpha' \in \mathcal{N}_r$, then
    \begin{enumerate}
      \item
        \(
          \left|
            \frac{\partial^2 \ell}
                 {\partial \alpha_k \partial \alpha_l}
              (\valpha)
	    -
            \frac{\partial^2 \ell}
                 {\partial \alpha_k \partial \alpha_l}
              (\valpha')
          \right|
            \leq (M_1 \, X_{kl} + \bar M_1 \, \tilde p_{kl})
	         \, \| \valpha' - \valpha \|_\infty,
        \)
	where
	\(
          M_1 = F_2 \, \varepsilon_0
	\)
	and
	\(
	  \bar M_1 = 2r \, (1 + \bar F_0 + \bar F_1),
	\)
	provided $k \neq l$;
     \item
        \(
          \left|
            \frac{\partial^2 \ell}{\partial \alpha_k^2}(\valpha)
	    -
            \frac{\partial^2 \ell}{\partial \alpha_k^2}(\valpha')
          \right|
            \leq  M_2 \, X_{k+}
	         \, \| \valpha' - \valpha \|_\infty,
        \)
	where
	\(
	  M_2 = M_1 + \bar M_1.
	\)
    \end{enumerate}
  \end{lemma}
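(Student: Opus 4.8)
The plan is to prove both bounds as mean-value estimates along the segment joining $\valpha$ and $\valpha'$. Since $\mathcal{N}_r$ is a supremum-norm ball it is convex, so this segment lies in $\mathcal{N}_r$, and for any smooth scalar field $g$ on $\mathcal{N}_r$ one has $|g(\valpha) - g(\valpha')| \leq \sup_{\valpha'' \in \mathcal{N}_r} \| \nabla g(\valpha'') \|_1 \, \| \valpha - \valpha' \|_\infty$, the dual norm of $\|\cdot\|_\infty$ being $\|\cdot\|_1$. Taking $g$ to be the off-diagonal entry $\partial^2 \ell / (\partial \alpha_k \partial \alpha_l)$ reduces part (i) to bounding the third-order partials $\partial^3 \ell / (\partial \alpha_k \partial \alpha_l \partial \alpha_m)$ uniformly over $\mathcal{N}_r$. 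Crucially, for $k \neq l$ this Hessian entry is assembled from the single $(k,l)$ term of the log-likelihood and so depends only on $(\alpha_k, \alpha_l)$; its gradient is therefore supported on $m \in \{k, l\}$, so only two third-order partials arise, with no global summation.

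For part (i) I would then differentiate the displayed expression for $\partial^2\ell/(\partial\alpha_k\partial\alpha_l)$ once more and bound each resulting piece using Lemmas~\ref{L:eps-bounds}--\ref{L:fij-bounds}. The term $-e^{\alpha_k+\alpha_l}$ reproduces itself under differentiation and is controlled by $e^{\alpha_k+\alpha_l} \leq r\,\tilde p_{kl}$ on $\mathcal{N}_r$, supplying the constant $1$ and the factor $r$ in $\bar M_1$. The term $X_{kl}\,\partial_l f_{kl}$ contributes the data-dependent part: its variation is of order $X_{kl}\,F_2\,\tilde p_{kl}$ by the second-derivative bound of Lemma~\ref{L:fij-bounds}, whence, using $\tilde p_{kl} = X_{k+}X_{l+}/X_{++} \leq \varepsilon_0$ (by Cauchy--Schwarz and $X_{i+}^2 \leq \varepsilon_0 X_{++}$), one obtains the $M_1\,X_{kl}$ contribution with $M_1 = F_2\,\varepsilon_0$. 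The remaining term $e^{\alpha_k+\alpha_l}(\bar f_{kl}+\partial_l\bar f_{kl})$ yields the $\bar F_0$ and $\bar F_1$ pieces of $\bar M_1$, where each surplus factor of $\tilde p_{kl}$ beyond the first is discarded via $\tilde p_{kl} \leq \varepsilon_0 \leq 1$. Collecting the pieces scaling as $X_{kl}$ and as $\tilde p_{kl}$ then gives $M_1 X_{kl} + \bar M_1 \tilde p_{kl}$.

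Part (ii) follows from part (i) by summation. The diagonal entry $\partial^2\ell/\partial\alpha_k^2$ is a sum over $j \neq k$ of terms each depending only on $(\alpha_k,\alpha_j)$ and structurally identical to an off-diagonal entry of the form $\partial^2\ell/(\partial\alpha_k\partial\alpha_j)$, so its difference is dominated termwise by the estimate of part (i), giving $\sum_{j\neq k}(M_1 X_{kj} + \bar M_1 \tilde p_{kj})\,\|\valpha-\valpha'\|_\infty$. Since $\sum_{j\neq k} X_{kj} = X_{k+}$ and $\sum_{j\neq k}\tilde p_{kj} = X_{k+}(X_{++}-X_{k+})/X_{++} \leq X_{k+}$, both sums collapse to $X_{k+}$, producing $(M_1+\bar M_1)X_{k+} = M_2 X_{k+}$ and matching $M_2 = M_1 + \bar M_1$.

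The main obstacle is the pair of terms built from $\bar f_{kl}$ and the exponential factor: differentiating $e^{\alpha_k+\alpha_l}\partial_l\bar f_{kl}$ produces a second partial derivative of $\bar f_{kl}$, which is not among the quantities tabulated in Lemma~\ref{L:fij-bounds}. Controlling it requires carrying the derivative estimates one order higher, most cleanly by returning to the raw dependence on $\varepsilon_{kl}$ and $p_{kl}$, whose first three partials are bounded by Lemma~\ref{L:eps-bounds} and the preceding bound on $p_{ij}$; one then checks that this extra contribution is of the same order as the $\bar F_1$ term and is absorbed into $\bar M_1$. The remainder of the argument is routine constant bookkeeping to confirm the stated forms $M_1 = F_2\,\varepsilon_0$ and $\bar M_1 = 2r(1+\bar F_0+\bar F_1)$.
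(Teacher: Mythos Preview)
The paper does not prove this lemma: it states that the bounds in Lemmas~\ref{L:eps-bounds}--\ref{L:hessian-diff} ``are straightforward to verify, and thus their proofs are omitted.'' Your mean-value argument on the convex ball $\mathcal{N}_r$, reducing part~(i) to bounds on the two nonzero third partials $\partial^3\ell/(\partial\alpha_k\partial\alpha_l\partial\alpha_m)$ with $m\in\{k,l\}$, and obtaining part~(ii) by summing the part~(i) estimate over $j\neq k$ using $\sum_{j\neq k}X_{kj}=X_{k+}$ and $\sum_{j\neq k}\tilde p_{kj}\leq X_{k+}$, is the natural route and is correct in outline. You have also correctly isolated the one nontrivial point: differentiating $e^{\alpha_k+\alpha_l}\,\partial_l\bar f_{kl}$ produces a second partial of $\bar f_{kl}$ not listed in Lemma~\ref{L:fij-bounds}, and your fix---returning to the raw bounds on $\varepsilon_{ij}$ and $p_{ij}$, whose first three partials are controlled---is the right way through. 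The residual work is constant bookkeeping; matching the displayed forms $M_1=F_2\varepsilon_0$ and $\bar M_1=2r(1+\bar F_0+\bar F_1)$ precisely requires some care in how the factor~$2$ from the two directional derivatives is allocated and how surplus factors of $\tilde p_{kl}\leq\varepsilon_0$ are discarded, but this does not affect the argument's validity or its downstream use in Lemma~\ref{L:hess-lipschitz}.
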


Lemma~\ref{L:E-norm-bounds} bounds the size of $\mE$, the relative error in approximating $\nabla^2 \ell(\vtalpha)$ by $\mH$, and the remaining Lemmas~\ref{L:inv-hess}--\ref{L:hess-lipschitz} verify that the necessary hypotheses are satisfied in order to apply Kantorovich's Theorem to bound the error in Newton's method.

  \begin{lemma}\label{L:E-norm-bounds}
    \(
       \| \mE \|_\infty \leq B_0 \, \varepsilon_0,
    \)
    where
    \(
      B_0 = (3/2)(L_2 + \bar L_2 + L_3).
    \)
  \end{lemma}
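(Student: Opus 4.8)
The plan is to write $\mE$ explicitly in terms of the discrepancy between the exact Hessian at $\vtalpha$ and its diagonal-plus-rank-one surrogate $\mH$, and then to bound the resulting matrix row sums using the entrywise estimates already established in Lemma~\ref{L:grad-hess-null-bounds}. Since $\nabla^2 \ell(\vtalpha) = \mH(\mI + \mE)$ by construction, we have $\mH \mE = \nabla^2\ell(\vtalpha) - \mH$, and hence $\mE = \mH^{-1} \bm{G}$ with $\bm{G} = \nabla^2\ell(\vtalpha) - \mH$. Because $\| \mE \|_\infty$ is the maximum absolute row sum of $\mE$, the entire argument reduces to controlling the row sums of $\mH^{-1} \bm{G}$.

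First I would match the entries of $\bm{G}$ to the quantities already bounded in Lemma~\ref{L:grad-hess-null-bounds}. By the definition of $\mH$, its off-diagonal entries are $\mH_{kl} = -X_{k+} X_{l+} / X_{++} = -\tilde p_{kl}$ and its diagonal entries are $\mH_{kk} = -(X_{k+} + \tilde p_{kk})$, so that $\bm{G}_{kl} = \partial^2 \ell / (\partial\alpha_k \partial\alpha_l)(\vtalpha) + \tilde p_{kl}$ for $k \neq l$ and $\bm{G}_{kk} = \partial^2 \ell / \partial\alpha_k^2(\vtalpha) + X_{k+} + \tilde p_{kk}$. These are exactly the expressions controlled in parts~2 and~3 of Lemma~\ref{L:grad-hess-null-bounds}, evaluated at $r = 1$ since $\mathcal{N}_1 = \{\vtalpha\}$, yielding $|\bm{G}_{kl}| \leq (L_2 X_{kl} + \bar L_2 \tilde p_{kl}) \varepsilon_0$ and $|\bm{G}_{kk}| \leq L_3 X_{k+} \varepsilon_0$. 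Summing over a row and using $\sum_{l \neq k} X_{kl} = X_{k+}$ together with $\sum_{l \neq k} \tilde p_{kl} = X_{k+}(1 - X_{k+}/X_{++}) \leq X_{k+}$ then gives the single clean bound $\sum_l |\bm{G}_{kl}| \leq (L_2 + \bar L_2 + L_3)\, X_{k+}\, \varepsilon_0$.

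Next I would substitute the Sherman--Morrison form $\mH^{-1} = -\mD^{-1} + (2 X_{++})^{-1} \vone \vone^\trans$ to obtain the entrywise identity $\mE_{kl} = -\bm{G}_{kl}/X_{k+} + (2 X_{++})^{-1} \sum_m \bm{G}_{ml}$. The corresponding row sum splits into two contributions. The diagonal term contributes $X_{k+}^{-1} \sum_l |\bm{G}_{kl}| \leq (L_2 + \bar L_2 + L_3) \varepsilon_0$ by the previous step. The rank-one term contributes at most $(2 X_{++})^{-1} \sum_l |\sum_m \bm{G}_{ml}|$; here I would invoke the symmetry $\bm{G}_{ml} = \bm{G}_{lm}$, inherited from that of $\nabla^2 \ell$ and $\mH$, so that each column sum of $\bm{G}$ coincides with a row sum and is bounded by $(L_2 + \bar L_2 + L_3) X_{l+} \varepsilon_0$. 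Summing over $l$ and using $\sum_l X_{l+} = X_{++}$ cancels the prefactor $1/X_{++}$ and leaves $(1/2)(L_2 + \bar L_2 + L_3) \varepsilon_0$. Adding the two contributions gives $\| \mE \|_\infty \leq (3/2)(L_2 + \bar L_2 + L_3) \varepsilon_0 = B_0 \varepsilon_0$, as claimed.

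The computation is largely bookkeeping once Lemma~\ref{L:grad-hess-null-bounds} is available, and I expect the only delicate point to be the rank-one term. A naive submultiplicative estimate $\| \mH^{-1} \|_\infty \, \| \bm{G} \|_\infty$ would discard the essential cancellation, since the factor $1/X_{k+}$ arising in $\mH^{-1}$ is precisely what the row sum of $\bm{G}$, proportional to $X_{k+}$, is meant to absorb; pairing the worst row of $\mH^{-1}$ with the global norm of $\bm{G}$ breaks this row-by-row matching and fails to yield a bound proportional to $\varepsilon_0$. The decisive observations are that the $\vone \vone^\trans$ correction couples the column sums of $\bm{G}$ to the degree sequence, and that the symmetry of $\bm{G}$ lets one reuse the row-sum bound already derived; the degree weights $X_{l+}$ then sum to $X_{++}$ and cancel the prefactor, so the rank-one term contributes only half the diagonal term rather than an uncontrolled amount. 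This interplay between the explicit inverse and the symmetric structure of $\bm{G}$ is exactly what produces the factor $3/2$ in $B_0$.
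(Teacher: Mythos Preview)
Your proposal is correct and follows essentially the same route as the paper. Both arguments write $\mE = \mH^{-1}\bm{G}$ with $\bm{G} = \nabla^2\ell(\vtalpha) - \mH$, invoke Lemma~\ref{L:grad-hess-null-bounds} for the entrywise bounds on $\bm{G}$, substitute the Sherman--Morrison form of $\mH^{-1}$, and arrive at the same $(3/2)(L_2+\bar L_2+L_3)\,\varepsilon_0$; the only cosmetic difference is that the paper packages the computation as an entrywise matrix inequality $|\mE| \leq \big(\mD^{-1} + (2X_{++})^{-1}\vone\vone^\trans\big)\big(L_2\varepsilon_0\mX + (\bar L_2\varepsilon_0/X_{++})\vd\vd^\trans + L_3\varepsilon_0\mD\big)$ and then reads off the row-sum norm, whereas you carry out the same arithmetic in scalar form row by row.
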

  \begin{proof}
    Matrix $\mE$ is given by
    \(
      \mE
        =
          \mH^{-1}
          \big(\nabla^2 \ell(\vtalpha)
               - \mH
          \big).
    \)
    In light of Lemma~\ref{L:grad-hess-null-bounds} and the triangle inequality,
    \begin{align*}
      |\mE|
        &\leq
          \Big(\mD^{-1} + \frac{1}{2 X_{++}} \vone \vone^\trans\Big)
          \Big(
            L_2 \, \varepsilon_0 \, \mX
            + \frac{\bar L_2 \, \varepsilon_0}{X_{++}} \vd \vd^\trans
            + L_3 \, \varepsilon_0 \, \mD
          \Big) \\
        &=
          \varepsilon_0
          \Big(
            \frac{L_2 + 3 \bar L_2 + L_3}{2 X_{++}}
            \vone \vd^\trans
            +
            L_2 \,
            \mD^{-1} \mX
            +
            L_3 \, \mI
          \Big).
    \end{align*}
    Thus,
    \(
      \| \mE \|_\infty
        \leq
          (3/2) \,
          \varepsilon_0  \,
          (
              L_2
            + \bar L_2
            + L_3
          )
        =
          B_0 \,
          \varepsilon_0.
    \)
   \end{proof}

 \begin{lemma}\label{L:inv-hess}
   If
   \(
     B_0 \, \varepsilon_0 < 1,
   \)
   then
   \(
     \big\|
       \big[
           \mD^{-1}
           \nabla^2 \ell (\vtalpha)
       \big]^{-1}
    \big\|_\infty
       \leq
       \varkappa,
  \)
  where
  \(
    \varkappa =
       (3/2) / (1 - B_0 \, \varepsilon_0).
  \)
 \end{lemma}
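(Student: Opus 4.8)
The plan is to reuse the factorization $\nabla^2 \ell(\vtalpha) = \mH(\mI + \mE)$ from the proof of Theorem~\ref{T:null-mle-approx}, together with the Sherman--Morrison form of $\mH^{-1}$ and the norm bound $\|\mE\|_\infty \le B_0\,\varepsilon_0$ supplied by Lemma~\ref{L:E-norm-bounds}. First I would left-multiply by $\mD^{-1}$ and invert, writing
\[
  \big[\mD^{-1}\nabla^2\ell(\vtalpha)\big]^{-1}
    = (\mI + \mE)^{-1}\,\big(\mH^{-1}\mD\big),
\]
which is legitimate once $\mI + \mE$ is shown to be invertible. Submultiplicativity of $\|\cdot\|_\infty$ then reduces the claim to bounding the two factors separately.

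The first factor I would evaluate explicitly. Substituting $\mH^{-1} = -\mD^{-1} + (2X_{++})^{-1}\vone\vone^\trans$ and using $\vone^\trans\mD = \vd^\trans$ gives $\mH^{-1}\mD = -\mI + (2X_{++})^{-1}\vone\vd^\trans$, a matrix whose $i$th diagonal entry is $-1 + X_{i+}/(2X_{++})$ and whose off-diagonal entries in row $i$ are $X_{j+}/(2X_{++})$. Since $X_{i+} \le X_{++}$, each diagonal entry lies in $[-1, -1/2]$, so its absolute value is $1 - X_{i+}/(2X_{++})$; summing the row and using $\sum_{j} X_{j+} = X_{++}$ yields the $i$th absolute row sum $3/2 - X_{i+}/X_{++} \le 3/2$. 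Hence $\|\mH^{-1}\mD\|_\infty \le 3/2$.

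For the second factor, the hypothesis $B_0\,\varepsilon_0 < 1$ combined with $\|\mE\|_\infty \le B_0\,\varepsilon_0$ makes $\|\mE\|_\infty < 1$, so $\mI + \mE$ is invertible and the Neumann series gives $\|(\mI + \mE)^{-1}\|_\infty \le (1 - \|\mE\|_\infty)^{-1} \le (1 - B_0\,\varepsilon_0)^{-1}$. Combining the two bounds produces $\varkappa = (3/2)/(1 - B_0\,\varepsilon_0)$ as required. I expect the only nonroutine step to be the explicit row-sum computation for $\mH^{-1}\mD$: one must check that the diagonal term $-1$ genuinely dominates, so that its contribution does not cancel against the rank-one correction, after which the bound $3/2$ drops out cleanly. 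The Neumann-series estimate is entirely standard.
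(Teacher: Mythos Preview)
Your proposal is correct and matches the paper's own argument essentially line for line: the paper also factors $[\mD^{-1}\nabla^2\ell(\vtalpha)]^{-1} = (\mI+\mE)^{-1}(\mH^{-1}\mD)$, bounds $\|(\mI+\mE)^{-1}\|_\infty$ via the Neumann-series estimate (citing Lemma~2.3.3 of Golub and Van Loan), and uses $\|\mH^{-1}\mD\|_\infty \le 3/2$. Your explicit row-sum computation for $\mH^{-1}\mD$ simply fills in a detail the paper leaves implicit.
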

 \begin{proof}
   This follows from Lemma~\ref{L:E-norm-bounds} and Lemma~2.3.3
   in \cite{golub1996matrix}, with the bound
   \[
     \| [\mD^{-1} \nabla^2 \ell(\vtalpha)]^{-1} \|_\infty
       \leq \| (\mI + \mE)^{-1} \|_\infty \, \| \mH^{-1} \mD \|_\infty.
   \]
 \end{proof}

  \begin{lemma}\label{L:newton-step}
  If
  \(
    B_0 \, \varepsilon_0 < 1,
  \)
  then
   \(
     \big\|
       [\nabla^2 \ell(\vtalpha)]^{-1}
       [\nabla   \ell(\vtalpha)]
     \big\|_\infty
       \leq
       \delta,
   \)
   where
   \(
     \delta
       =
       L_1 \, \varkappa \, \varepsilon_0.
   \)
  \end{lemma}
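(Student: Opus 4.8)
The plan is to reduce the Newton step to a product of two quantities that the preceding lemmas already control: the norm of the inverse rescaled Hessian, bounded by $\varkappa$ in Lemma~\ref{L:inv-hess}, and the norm of the rescaled gradient, bounded through Lemma~\ref{L:grad-hess-null-bounds}. First I would insert $\mD\mD^{-1}$ and use the identity $[\mD^{-1}\nabla^2\ell(\vtalpha)]^{-1} = [\nabla^2\ell(\vtalpha)]^{-1}\mD$, valid since $\mD$ is of full rank, to write
\[
  [\nabla^2\ell(\vtalpha)]^{-1}\,\nabla\ell(\vtalpha)
    = [\mD^{-1}\nabla^2\ell(\vtalpha)]^{-1}\,\big(\mD^{-1}\nabla\ell(\vtalpha)\big).
\]
Submultiplicativity of the operator $\infty$-norm then gives
\[
  \big\|[\nabla^2\ell(\vtalpha)]^{-1}\nabla\ell(\vtalpha)\big\|_\infty
    \leq
      \big\|[\mD^{-1}\nabla^2\ell(\vtalpha)]^{-1}\big\|_\infty
      \,
      \big\|\mD^{-1}\nabla\ell(\vtalpha)\big\|_\infty.
\]

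Next I would bound each factor separately. The hypothesis $B_0\,\varepsilon_0<1$ assumed here is precisely that of Lemma~\ref{L:inv-hess}, so the first factor is at most $\varkappa$. For the second factor, the $k$th component of $\mD^{-1}\nabla\ell(\vtalpha)$ is $X_{k+}^{-1}\,\partial\ell/\partial\alpha_k(\vtalpha)$; by the gradient bound of Lemma~\ref{L:grad-hess-null-bounds}(1), $|\partial\ell/\partial\alpha_k(\vtalpha)| \leq L_1\,X_{k+}\,\varepsilon_0$, so every component is bounded in absolute value by $L_1\,\varepsilon_0$, and hence $\|\mD^{-1}\nabla\ell(\vtalpha)\|_\infty \leq L_1\,\varepsilon_0$. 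Combining the two bounds yields $\varkappa\,L_1\,\varepsilon_0 = \delta$, which is the claim.

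There is no substantial obstacle here: once the factorization is in place the argument is bookkeeping built directly on Lemmas~\ref{L:inv-hess} and~\ref{L:grad-hess-null-bounds}. The one point deserving attention is that dividing by $X_{k+}$ in the rescaled gradient exactly cancels the factor $X_{k+}$ appearing in the gradient bound; this cancellation is the very reason for rescaling the system by $\mD^{-1}$ when defining $\mF$, and it is what makes the Newton step size $\delta$ scale with $\varepsilon_0$ through $L_1\varkappa$, rather than with the possibly large node degrees.
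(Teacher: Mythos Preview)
Your proposal is correct and follows essentially the same route as the paper's proof: factor the Newton step through $\mD^{-1}$, apply submultiplicativity to split into the inverse rescaled Hessian (bounded by $\varkappa$ via Lemma~\ref{L:inv-hess}) and the rescaled gradient (bounded by $L_1\varepsilon_0$ via Lemma~\ref{L:grad-hess-null-bounds}). The paper's version is merely terser.
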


  \begin{proof}
   In light of Lemma~\ref{L:grad-hess-null-bounds}, we know that
   \(
     |\nabla \ell(\vtalpha)| \leq L_1 \, \varepsilon_0 \, \vd.
   \)
   The result now follows after bounding
   \[
     \| [\nabla^2 \ell(\vtalpha)]^{-1} [ \nabla \ell(\vtalpha)] \|_\infty
       \leq
         \| [ \mD^{-1} \nabla^2 \ell(\vtalpha)]^{-1} \|_\infty
         \,
         \| \mD^{-1} \nabla \ell(\vtalpha)] \|_\infty.
   \]
 \end{proof}

 \begin{lemma}\label{L:hess-lipschitz}
   If $\valpha, \valpha' \in \mathcal{N}_r$, then
   \[
     \big\| \mD^{-1} [\nabla^{2} \ell (\valpha)]
     -  \mD^{-1} [\nabla^{2} \ell (\valpha')]
     \big\|_\infty
       \leq
         \lambda
         \,
         \| \valpha - \valpha' \|_\infty,
   \]
   where
   \(
     \lambda = 2 M_2.
   \)
 \end{lemma}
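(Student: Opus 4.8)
The plan is to exploit the fact that, for the vector norm $\|\cdot\|_\infty$, the induced matrix operator norm is the maximum absolute row sum, and that left-multiplication by $\mD^{-1}$ merely rescales the $k$th row of $\nabla^2\ell$ by $1/X_{k+}$. Since the $(k,l)$ entry of $\mD^{-1}[\nabla^2\ell(\valpha)] - \mD^{-1}[\nabla^2\ell(\valpha')]$ is $X_{k+}^{-1}\{\partial^2\ell/\partial\alpha_k\partial\alpha_l(\valpha) - \partial^2\ell/\partial\alpha_k\partial\alpha_l(\valpha')\}$, I would begin from the identity
\[
  \big\|\mD^{-1}[\nabla^2\ell(\valpha)] - \mD^{-1}[\nabla^2\ell(\valpha')]\big\|_\infty
    = \max_k \frac{1}{X_{k+}}\sum_l
      \left|\frac{\partial^2\ell}{\partial\alpha_k\partial\alpha_l}(\valpha)
            - \frac{\partial^2\ell}{\partial\alpha_k\partial\alpha_l}(\valpha')\right|,
\]
so that the entire argument reduces to bounding the inner row sum for each fixed $k$.

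Next I would split the $l$-sum into its diagonal term $l = k$ and its off-diagonal terms $l \neq k$, applying the two parts of Lemma~\ref{L:hessian-diff} respectively. The diagonal term is controlled directly by part~2, giving $M_2\,X_{k+}\,\|\valpha - \valpha'\|_\infty$. For the off-diagonal contribution, part~1 yields the bound $\sum_{l\neq k}(M_1 X_{kl} + \bar M_1 \tilde p_{kl})\,\|\valpha - \valpha'\|_\infty$, so the key step is to evaluate the two elementary row sums $\sum_{l\neq k}X_{kl}$ and $\sum_{l\neq k}\tilde p_{kl}$. Here I would use $\sum_{l\neq k}X_{kl} = X_{k+}$ by definition of the nodal degree, together with the moment-matching identity already established in the text, namely $\sum_{l\neq k}\tilde p_{kl} = X_{k+}(1 - X_{k+}/X_{++}) \leq X_{k+}$. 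Substituting these gives an off-diagonal bound of $(M_1 + \bar M_1)\,X_{k+}\,\|\valpha - \valpha'\|_\infty = M_2\,X_{k+}\,\|\valpha - \valpha'\|_\infty$, exactly matching the diagonal contribution. Adding the two and dividing by $X_{k+}$ shows that each scaled row sum is at most $2M_2\,\|\valpha - \valpha'\|_\infty$; taking the maximum over $k$ then delivers the claim with $\lambda = 2M_2$.

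The computation is essentially mechanical once Lemma~\ref{L:hessian-diff} is in hand, so there is no serious analytical obstacle; the only point requiring care is the bookkeeping that makes the off-diagonal and diagonal contributions coincide. This hinges on the definition $M_2 = M_1 + \bar M_1$ and on the fact that both $\sum_{l\neq k}X_{kl}$ and $\sum_{l\neq k}\tilde p_{kl}$ are bounded by the same quantity $X_{k+}$ — the latter precisely because $\vtalpha$ is the approximate moment-matching estimator, which is what allows the clean factor of two to emerge rather than two separate and unequal constants.
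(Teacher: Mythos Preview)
Your argument is correct and is precisely the routine computation the paper has in mind when it says the lemma ``is a direct consequence of Lemma~\ref{L:hessian-diff}.'' You have simply made explicit the row-sum bookkeeping that the paper omits, using the same split into diagonal and off-diagonal terms and the same identities $\sum_{l\neq k}X_{kl}=X_{k+}$ and $\sum_{l\neq k}\tilde p_{kl}\leq X_{k+}$.
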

 \begin{proof}
   This is a direct consequence of Lemma~\ref{L:hessian-diff}.
 \end{proof}

\subsection*{Proof of Corollary~\ref{C:null-pij-approx}}

We aim to show that $|\hat p_{ij} - \tilde p_{ij}| / \tilde p_{ij} \leq 24 \, (C_0 + 1) \, \varepsilon_0$ under the conditions of Theorem~\ref{T:null-mle-approx}.
Write
\[
  \Big|\frac{\hat p_{ij} - \tilde p_{ij}}{\tilde p_{ij}}\Big|
    = \big|\exp\{(\hat \alpha_i - \tilde \alpha_i)
         + (\hat \alpha_j - \tilde \alpha_j)
         + \varepsilon_{ij}(\hat \alpha_i,\hat \alpha_j)\}
         - 1\big|.
\]
Using the bound $\tilde \alpha_i \leq (1/2) \log \varepsilon_0$ from the hypothesis of Theorem~\ref{T:null-mle-approx}, along with the result of the theorem that
  \(
    \| \vhalpha - \vtalpha \|_\infty
      \leq  C \, \varepsilon_0,
  \)
we may write
\[
  \hat \alpha_i \leq \frac{1}{2} \log \varepsilon_0 + C \, \varepsilon_0.
\]
By Assumption~\ref{A:small-eps} and the above, it thus follows that
\[
|\varepsilon_{ij}(\hat \alpha_i,\hat \alpha_j)|
  \leq C_0 \exp(\hat \alpha_i + \hat \alpha_j)
  \leq C_0 \, \varepsilon_0 \, \exp(2 C \varepsilon_0).
\]
Now, since $\varepsilon_0 \leq \bar \varepsilon_0 = \{15 \, (C_0 + 1)\}^{-2}$ and with $C = 10 \, (C_0 + 1)$, we obtain after
simplification that
\[
  |(\hat \alpha_i - \tilde \alpha_i)
         + (\hat \alpha_j - \tilde \alpha_j)
         + \varepsilon_{ij}(\hat \alpha_i,\hat \alpha_j)|
    \leq \text{21$\cdot$1} \, \varepsilon_0 \, (C_0 + 1)
    \leq \log \text{1$\cdot$1}.
\]
The result then follows by using the bound $|e^x - 1| \leq |x \, e^x|$.

\subsection*{Proof of Corollary~\ref{C:null-ll-approx}}

Corollary~\ref{C:null-ll-approx} claims $|\hat \ell - \tilde \ell| / |\tilde \ell | \leq 49 \, (C_0 + 1) \, \varepsilon_0$.  From Corollary~\ref{C:null-pij-approx} we obtain the bounds
\begin{gather*}
  |\log \hat p_{ij} - \log \tilde p_{ij}| \leq C_1 \, \varepsilon_0, \\
  |\log (1 - \hat p_{ij}) - \log (1 - \tilde p_{ij})|
    \leq C_1 \, \frac{\varepsilon_0}{1 - \varepsilon_0} \, \tilde p_{ij},
\end{gather*}
with $C_1 = 24 \, (C_0 + 1)$.
Therefore,
\[
  |\hat \ell - \tilde \ell|
    \leq
      C_1 \, \varepsilon_0 \sum_{i<j} X_{ij}
      +
      C_1 \, \frac{\varepsilon_0}{1 - \varepsilon_0} \sum_{i<j} (1 - X_{ij}) \, \tilde p_{ij}
    \leq
      C_1 \, \frac{\varepsilon_0}{1 - \varepsilon_0} X_{++}.
\]
Now write
\begin{align*}
  |\tilde \ell|
    &= -\sum_{i=1}^n \tilde \alpha_i \, X_{i+} - \sum_{i < j} (1 - X_{ij}) \log(1 - \tilde p_{ij}) \\
    &= \frac{X_{++}}{2} \log X_{++} - \sum_{i = 1}^{n} X_{i+} \log X_{i+}
        - \sum_{i < j} (1 - X_{ij}) \log(1 - \tilde p_{ij}) \\
    &\geq \frac{X_{++}}{2} \log X_{++} - X_{++} \log \frac{X_{++}}{n} \\
    &= \frac{X_{++}}{2} \log \frac{n^2}{X_{++}} \\
    &\geq \frac{X_{++}}{2}.
\end{align*}
Finally, putting the two bounds together and using that $\varepsilon_0 \leq \bar \varepsilon_0 \leq 15^{-2}$, we get
\[
  \frac{|\hat \ell - \tilde \ell|}{|\tilde \ell|}
     \leq 2 \, C_1 \, \frac{\varepsilon_{0}}{1 - \varepsilon_{0}}
     \leq 49 \, (C_0 + 1) \, \varepsilon_0.
\]

\bibliographystyle{biometrika}
\bibliography{refs}

\end{document}